\documentclass[12pt]{amsart}
\usepackage{graphicx}
\usepackage{amsmath, amsthm, amssymb,amsfonts, mathrsfs}
\usepackage[cal=euler]{mathalpha}
\usepackage{mathtools}
\usepackage{tikz}
\usetikzlibrary{decorations.pathreplacing,calligraphy,calc}
\usepackage{tikz-cd}
\usepackage{color} 
\definecolor{rossred}{rgb}{1.0,0.25,0.66}  
\definecolor{rossgreen}{rgb}{0.25,0.66,0.25} 
\definecolor{rossblue}{rgb}{0.25,0.66,1.0}
\definecolor{sashapurple}{rgb}{0.5,0.15,0.5}
\usepackage[pdfborder={0 0 0}]{hyperref}
\hypersetup{
    colorlinks=true,
    linkcolor=rossblue,
    citecolor=rossgreen
}
\usepackage{url}
\usepackage{enumitem}
\usepackage[paperwidth = 8.5in, paperheight = 11in, inner = 1in, outer = 1in, top = 1in, bottom = 1in]{geometry}
\usepackage{graphicx}%
\usepackage{multirow}%
\usepackage{amsmath,amssymb,amsfonts}%
\usepackage{amsthm}%
\usepackage{mathrsfs}%
\usepackage[title]{appendix}%
\usepackage{xcolor}%
\usepackage{textcomp}%
\usepackage{manyfoot}%
\usepackage{booktabs}%
\usepackage{algorithmicx}%
\usepackage{algpseudocode}%
\usepackage{listings}%
\usepackage{tikz}
\usepackage{xspace}
\usepackage{tkz-graph}
\usepackage{caption}
\usepackage{cancel}
\usepackage{comment}
\usepackage{enumitem}
\usepackage[T1]{fontenc}
\usepackage{lmodern}
\usepackage{mathrsfs} 
\usepackage{caption}
\captionsetup{font=small, labelfont=bf}
\numberwithin{equation}{section}
\hyphenation{semi-stable}
\setlength{\parskip}{4pt}





\theoremstyle{plain}
\newtheorem{theorem}{Theorem}[section]

\newtheorem{lemma}[theorem]{Lemma}

\newtheorem{corollary}[theorem]{Corollary}

\theoremstyle{definition}
\newtheorem{definition}[theorem]{Definition}

\newtheorem{remark}[theorem]{Remark}

\AtBeginEnvironment{example}{%
  \pushQED{\qed}%
}
\AtEndEnvironment{example}{\popQED\endexample}

\title{The Unit-Zero Divisor Graph of a Commutative Ring}
\author{Vika Yugi Kurniawan}
\address
{Department of Mathematics,
Universitas Gadjah Mada, Yogyakarta, Indonesia}
\address
{Department of Mathematics,
Universitas Sebelas Maret, Surakarta, Indonesia}
\email{vikayugikurniawan@mail.ugm.ac.id}

\author{Yeni Susanti}
\address
{Department of Mathematics,
Universitas Gadjah Mada, Yogyakarta, Indonesia}
\email{yeni\_math@ugm.ac.id}

\author{Budi Surodjo}
\address
{Department of Mathematics,
Universitas Gadjah Mada, Yogyakarta, Indonesia}
\email{surodjo\_b@ugm.ac.id}

\keywords{finite commutative ring, bipartite graph, Eulerian graph, Hamiltonian graph, Jacobson radical}
\subjclass[2020]{05C25, 05C75, 13A70}
\date{\today}

\begin{document}

\begin{abstract}
This paper introduces a new approach to associating a graph with a commutative ring. Let $R$ be a commutative ring with identity. The unit-zero divisor graph of a commutative ring $R$, denoted by $G_{UZ}(R)$, offers a novel framework for exploring the interaction between ring and graph structures. The vertex set of $G_{UZ}(R)$ consists of all elements of the ring $R$. Two distinct vertices $x$ and $y$ in $G_{UZ}(R)$ are adjacent if and only if $x + y$ is a unit and $xy$ is a zero divisor in $R$. This dual adjacency condition gives rise to a graph that reflects both the additive and multiplicative behavior of the ring. This study investigates key structural properties of $G_{UZ}(R)$, including regularity, bipartiteness, planarity, and Hamiltonicity. In addition, it examines how these graph features are influenced by the algebraic structure of the ring, particularly the group of units, the set of zero divisors, ideals, and the Jacobson radical.
\end{abstract}

\maketitle

\section{Introduction}\label{sec:intro}

The investigation of graph theory in connection with algebraic structures, particularly commutative rings, has undergone considerable advancement since its initial development. Algebraic graph theory provides a robust framework for understanding algebraic structures by associating elements of a ring with vertices and defining edges based on specific algebraic relations. A pivotal concept within this domain is the zero-divisor graph, which was initially proposed by Beck in \cite{Beck1988} as part of his study on the structure of commutative rings. While Beck's original formulation included all elements of the ring as vertices, the definition was later refined by Anderson and Livingston \cite{Anderson1999}, who focused solely on the nonzero zero-divisors as vertices. In \cite{Anderson1999}, two distinct vertices $x$ and $y$ are adjacent if and only if their product equals zero, i.e., $ xy = 0$. This definition not only elucidates the relationship between algebra and graph theory but also lays the groundwork for further scholarly inquiry into such graphs' characteristics and potential applications. Subsequent scholarship has built upon Beck's foundational framework, culminating in a more profound comprehension of the structural attributes inherent in zero-divisor graphs. For instance, investigations have concentrated on the Laplacian eigenvalues associated with these graphs, thereby uncovering substantial insights into their spectral properties \cite{chattopadhyay2020laplacian}. Furthermore, the concept of zero-divisor graphs has been generalized to encompass various categories of rings, such as semirings \cite{DOLAN2012} and commutative semirings \cite{EbrahimiAtani2009}, thus expanding the breadth of their applicability. The ramifications of these investigations permeate domains such as molecular structure \cite{Ahmadini2020}, combinatorial optimization \cite{Radha2021}, and computational algorithm \cite{Rather2022}.

Simultaneously, investigating unit graphs related to commutative rings has emerged as an essential area of scholarly inquiry. The notion of unit graphs pertinent to rings has attracted considerable focus within mathematical research, especially following Ashrafi's seminal contributions in 2010 \cite{Ashrafi2010}. A unit graph corresponding to a ring is formulated by designating vertices for all elements within the ring, where edges link two vertices if their summation constitutes a unit in the ring. This framework facilitates the examination of various algebraic attributes and interrelations inherent to the units of the ring. Ashrafi's research notably illuminated the bipartite property of unit graphs, showing that such graphs exhibit bipartiteness under specific algebraic conditions, an observation that plays a key role in understanding their structural behavior within graph theory. Building on this, Su and Wei \cite{Su2019} conducted a more focused investigation on the diameter of unit graphs, offering valuable insight into how the algebraic nature of the underlying ring affects graph-theoretic distance properties. This line of research deepens our understanding of the interplay between ring structures and graph metrics. Further geometric aspects were explored by Tang et al. \cite{Tang&Su2015}, who examined the planarity of unit graphs, shedding light on their spatial configurations and implications for graph visualization.  A recent advancement in this study, Susanti et al. \cite{Susanti2025} proposed the notion of a unit regular graph over finite rings as a generalization of the unit graph. In this graph, the vertices correspond to ring elements, and two vertices are adjacent if their sum is a unit regular element. Their investigation extends the framework of unit-based graphs by analyzing structural properties such as completeness, vertex degrees, girth, matching and independence numbers, and conditions for Eulerian circuit and Hamiltonian cycles. 

Beyond their theoretical relevance, unit graphs exhibit practical applications in various domains, including computer science, social networks, and nanotechnology. For instance, the construction of linear codes using incidence matrices of unit graphs, as demonstrated in \cite{Annamalai2021LinearCode}, highlights how algebraic graph structures derived from ring units can be utilized in coding theory applications. In another direction, Asir et al. \cite{Asir2022Wiener} investigated the Wiener and hyper-Wiener indices of unit graphs and explicitly determined their values, revealing potential applications in molecular chemistry where these indices are used to model molecular stability and branching. Moreover, Fakieh et al. \cite{Fakieh2024spectrum} investigated the spectral properties associated with unit graphs, which has facilitated new opportunities for their application in the analysis of network stability and dynamics.

Many different types of algebraic graphs are developed from these two basic concepts, each offering unique insights into ring theory. But until now, research on algebraic graphs has generally considered only one of the ring operations, addition or multiplication, as the basis for defining adjacency between vertices. For example, Anderson and Badawi \cite{AndersonBadawi2008total} define the total graph of a commutative ring $R$, denoted by $T(\Gamma(R))$. It is a graph with $R$ as the vertex set, and two distinct vertices $u$ and $v$ are adjacent if and only if their sum is a zero divisor. Nikhmer and Khojasteh \cite{NIKMEHR2013nilpoten} define the nilpotent graph of a ring. The nilpotent graph of the ring $R$, denoted as $\Gamma_N(R)$, is a graph whose vertex set $Z_N(R)=\{0\neq x \in R | (\exists y\in R\setminus\{0\}) \  xy\in N(R)\}$, where two distinct vertices $x$ and $y$ are adjacent if and only if $xy$ is a nilpotent element of $R$. Mohammad and Shuker \cite{Mohammad2022idempdivisor} define a new type of graph called the idempotent divisor graph of a ring $R$. The vertex set of this graph consists of non-zero elements in \emph{R}, and two vertices $v_1$ and $v_2$ are adjacent if and only if $v_1v_2=e$, where $e$ is a non-unit idempotent element. And there are still many types of graphs associated with rings that only consider one operation on the ring as the condition for vertex adjacency, such as the idempotent graph of a ring \cite{AKBARI-idempotent2013}, regular graph of a commutative ring \cite{Akbari2013regular}, non-nilpotent graph of commutative rings \cite{non-nilpoten2024}, tripotent divisor graph of a commutative ring \cite{Khaleel2024tripotent}, and non-zero graph of a commutative ring \cite{Nawaf2025}.

The representation of those graphs has been quite helpful in visualizing the relationships between elements in the ring. However, considering that the structure of a ring is fundamentally determined by two interrelated operations, namely addition and multiplication, there is a possibility that more in-depth structural information has not been fully represented in the generated graph. A graph representation incorporating both ring operations as adjacency criteria is expected to offer a deeper insight into the ring's structure and the interactions among its elements. Moreover, it opens the possibility to explore how the combination of these two operations influences the combinatorial properties of the resulting graph. This research can pave the way for new directions in algebraic graph theory, including the discovery of novel structural properties, spectral characteristics, and their applications in network theory and cryptography.

In this paper, we introduce a new graph associated with a commutative ring, called unit-zero divisor graph, by modifying the adjacency condition of the unit graph. Unlike earlier constructions which typically rely on a single operation of the ring, this graph incorporates both addition and multiplication to define adjacency. Specifically, for a commutative ring $R$ with identity, the unit-zero divisor graph of $R$, denoted by $G_{UZ}(R)$, is the graph whose vertices are all elements of $R$, with two distinct vertices $u$ and $v$ adjacent if and only if $u+v$ is a unit and $uv$ is a zero divisor. This dual-operation approach allows for a richer and more comprehensive representation of the ring structure. Beyond this algebraic motivation, another key reason for studying the unit-zero divisor graph lies in its potential applications in coding theory and cryptography. Previous studies have demonstrated that unit graphs of commutative rings may exhibit bipartite structures under certain algebraic conditions. By strengthening the adjacency criterion in $G_{UZ}(R)$, we obtain a graph that preserves bipartiteness while exhibiting greater sparsity. Graphs with such properties are widely regarded in the coding theory and cryptography areas. For example, Wang et al. \cite{Wang2019SparseBipartit} utilize sparse bipartite graphs as the underlying structure for the belief propagation (BP) decoder in 5G LDPC and polar codes. These structures enable maximization of hardware reuse and support efficient error correction. In the context of cryptography, Wang et al. \cite{Wang2024sparse} highlight that sparse graphs contribute to lower computational complexity and stronger resistance against certain types of attacks. Additionally, Chen et al. \cite{Chen2024bipartitcode} and Wu et al. \cite{Wu2025bipartitEncrypt} emphasize the importance of bipartite graphs in constructing lightweight and efficient cryptographic and coding systems.

This article presents a comprehensive study of the unit-zero divisor graph associated with commutative rings with identity, examining both general structural properties and specific behaviors in particular rings, such as local rings and rings of the form $\mathbb{Z}_{n}$. The investigation focuses on key graph-theoretic parameters and properties, including regularity, planarity, bipartiteness, Hamiltonicity, and Eulerianity, each of which is analyzed in relation to the underlying algebraic structure of the ring. Special emphasis is placed on how ideals, units, zero divisors, and the Jacobson radical influence the structure and characteristics of the graph. 

\section{Basic Notations and Preliminaries }

This section reviews several fundamental concepts and results from commutative ring theory and graph theory that will be used throughout the paper. Unless stated otherwise, all rings considered are finite, commutative, and possess a multiplicative identity. 
We begin with a basic but important observation: in any finite commutative ring $R$, every nonzero element is either a unit or a zero divisor. The set of units and the set of zero divisors of the ring $R$ are denoted by $U(R)$ and $Z(R)$, respectively. We now consider the relationship between ideals and the set of zero divisors. In a finite commutative ring, any proper ideal $I$ is necessarily contained in $Z(R)$. This follows from the fact that elements of a proper ideal cannot be invertible, and in the finite setting, non-units must be zero divisors.
It is also well known that in any commutative ring, every non-unit element is contained in some maximal ideals. This fact plays a crucial role in localizing ring behavior and is foundational to much of the structure theory in commutative algebra.

Recall that a commutative ring $R$ with identity is said to be \emph{local} if it contains a unique maximal ideal. Equivalently, $R$ is local if and only if the set of all non-units forms an ideal. This alternative characterization offers a useful criterion for identifying local rings and connects neatly with the graph-theoretic properties we will study. Finally, for finite local rings, there is a particularly elegant identity: the maximal ideal coincides with the set of all zero divisors. That is, if $R$ is a finite local commutative ring with maximal ideal $\mathfrak{m}$, then $\mathfrak{m} = Z(R)$.

In order to investigate the structural properties of the unit-zero divisor graph associated with a commutative ring, we utilize several classical results from graph theory. These results provide useful criteria for analyzing graph parameters such as Eulerian circuit and Hamiltonian cycles, as well as planarity.

One of the fundamental results we use is Euler's Theorem, which characterizes when a connected graph admits an Eulerian circuit. It states that a nontrivial connected graph $G$ is Eulerian if and only if every vertex of $G$ has an even degree. This theorem is particularly relevant when we analyze the degree structure of $G_{UZ}(R)$ in the context of finite rings.

We also consider the important properties related to Hamiltonian cycles. We refer to a well-known result that a complete bipartite graph $K_{m,n}$ is Hamiltonian if and only if $m = n \geq 2$. This condition ensures the existence of a Hamiltonian cycle that alternates between the two partitions without repetition, which is only possible when both partitions have the same number of vertices.

To analyze the planarity of $G_{UZ}(R)$, we appeal to a foundational result known as Kuratowski's Theorem. Before stating the theorem, we recall the concept of a graph subdivision. A graph $H$ is called a \textit{subdivision} of a graph $G$ if $H \cong G$, or $H$ can be obtained by inserting one or more vertices of degree two along some of the edges of $G$. Kuratowski's Theorem then characterizes planar graphs in terms of forbidden subdivisions. A graph is planar if and only if it does not contain a subdivision of the complete graph $K_5$ or the complete bipartite graph $K_{3,3}$ as a subgraph.

Next, we present a classical theorem that characterizes when a bipartite graph contains a complete bipartite subgraph. The theorem is the  \textit{Kóvári-Sós-Turán Theorem (KST Theorem)}, which provides an upper bound on the number of edges in bipartite graphs that exclude a complete bipartite subgraph $K_{s,t}$ as a subgraph. Specifically, let $G = (X \cup Y, E(G))$ be a simple bipartite graph, where the partite sets $X$ and $Y$ contain $m$ and $n$ vertices, respectively. For any integers $s \geq 1$ and $t \geq 1$, if $G$ contains no complete bipartite subgraph $K_{s,t}$, then the number of edges in $G$ satisfies the inequality as follows
\[
|E(G)| \leq (t - 1)^{1/s} \cdot m \cdot n^{1 - 1/s} + (s - 1)n.
\]
This theorem plays a key role in identifying structural properties of bipartite graphs and is often used to derive sufficient conditions for the existence of small cycles, such as 4-cycles, within bipartite graphs.

We now formally define the central object of this study, namely the \emph{unit-zero divisor graph} of a commutative ring. This graph-theoretic construction encodes algebraic interactions between units and zero divisors within the ring and will serve as the main framework for the results discussed in this paper.

\begin{definition}
Let $R$ be a finite commutative ring with identity, $U(R)$ be the set of units in $R$, and $Z(R)$ be the set of zero divisors of $R$. The unit-zero divisor graph of $R$, denoted $G_{UZ}(R)$, is the graph whose vertex set is the set of all elements of $R$, where two distinct vertices $u, v \in R$ are adjacent if and only if $u+v \in U(R)$ and $uv \in Z(R)$.
\end{definition}

By the definition, we obtain that the unit-zero divisor graph of a commutative ring is always a simple undirected graph. The commutativity of both addition and multiplication ensures that the adjacency is symmetric. Furthermore, the binary operations in a ring are well-defined, hence the conditions for adjacency are uniquely determined for each pair of distinct elements. Next, assume for contradiction that a vertex $u \in R$ is adjacent to itself. Then it must satisfy both $2u \in U(R)$ and $u^2 \in Z(R)$. But if $2u \in U(R)$, then $(2u)^2 = 4u^2 \in U(R)$. On the other hand, since $u^2 \in Z(R)$, there exists a nonzero $y \in R$ such that $u^2y = 0$. This leads to $4(u^2y) = 4u^2y = 0$, implying $4u^2 \in Z(R)$, contradicting the fact that $4u^2 \in U(R)$. Therefore, the unit-zero divisor graph contains neither loops nor multiple edges.

\section{Results}
In this section, we present several fundamental results concerning the structure and properties of the unit-zero divisor graph $G_{UZ}(R)$ of a commutative ring $R$. The first investigation is to determine the degrees of the vertices in $G_{UZ}(R)$. Considering the adjacency condition between two vertices in $G_{UZ}(R)$ related to units and zero divisors in $R$, the relationship between the degrees of the vertices and the unit group structure in $R$ is determined. The first result shows that the degree of the vertices of $G_{UZ}(R)$ is at most the number of unit elements in $R$.

\begin{lemma} \label{lemma about maximum degree in general}
The maximum degree of the vertices in $G_{UZ}(R)$ is $|U(R)|$.
\end{lemma}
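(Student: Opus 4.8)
The plan is to establish the equality by proving two inequalities: first, that every vertex of $G_{UZ}(R)$ has degree at most $|U(R)|$, and second, that some vertex actually attains degree $|U(R)|$.

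For the upper bound, I would fix an arbitrary vertex $u \in R$ and observe that any neighbor $v$ of $u$ must satisfy $u + v \in U(R)$. Since addition makes $R$ into a finite abelian group, the translation map $v \mapsto u + v$ is a bijection of $R$ onto itself, so the set $\{\,v \in R : u + v \in U(R)\,\}$ has exactly $|U(R)|$ elements. The neighborhood of $u$ is contained in this set (it is cut down further by the requirements $uv \in Z(R)$ and $v \neq u$), hence $\deg(u) \le |U(R)|$. Taking the maximum over all vertices gives one inequality.

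For the matching lower bound, the natural candidate is the vertex $0$. A vertex $v$ is adjacent to $0$ exactly when $0 + v = v \in U(R)$ and $0 \cdot v = 0 \in Z(R)$. As recalled in the preliminary section, $0$ is a zero divisor (for instance, in a finite local ring the maximal ideal, which contains $0$, equals $Z(R)$), so the multiplicative condition is automatically satisfied; moreover $v \neq 0$ holds automatically because every unit is nonzero. Thus the neighborhood of $0$ is precisely $U(R)$, so $\deg(0) = |U(R)|$, and combined with the upper bound this shows the maximum degree equals $|U(R)|$.

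I do not anticipate a genuine obstacle here; the argument is essentially a counting argument built on the bijectivity of translation by $u$. The one point that deserves care is the role of the second adjacency condition $uv \in Z(R)$: it is exactly what can prevent a typical vertex from reaching $|U(R)|$ neighbors, so the real content of the lemma is the observation that at least one vertex — namely $0$ — is unaffected by this extra constraint.
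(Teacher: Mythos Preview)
Your proposal is correct and follows essentially the same approach as the paper: both arguments show $\deg(0)=|U(R)|$ directly and bound every other degree by $|U(R)|$ via the bijectivity of the translation $v\mapsto u+v$. The only cosmetic difference is that the paper treats the vertex $0$ first and the upper bound second, and it simply asserts $0\in Z(R)$ rather than invoking the local-ring remark you mention.
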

\begin{proof}
Let $0\in R$. It is clear that $0+u=u \in U(R)$ and $0.u=0 \in Z(R)$ for every $u\in U(R)$. We obtain $\deg_{G_{UZ}(R)}(0)=|U(R)|$. Let $x \in R$ be any vertex and $N(x)$ be the set of all vertices in $G_{UZ}(R)$ that are adjacent to $x$. This means that for every $y\in N(x)$, $x+y \in U(R)$ and $xy \in Z(R)$. Note that $R + x = R$; consequently, for every $u \in U(R)$ there exists an element $x_u \in R$ such that $x_u + x = u$. This implies that $x_u$ is uniquely determined by $u$ such that $\deg_{G_{UZ}(R)}(x)\leq|U(R)|$. Thus, there are no other vertices in $G_{UZ}(R)$ whose degree is greater than $\deg_{G_{UZ}(R)}(0)=|U(R)|$. Therefore, the maximum vertex degree in $G_{UZ}(R)$ is $|U(R)|$.
\end{proof}

The maximum degree appears to be the same for all rings. This raises the question of whether the same holds for the minimum degree. Consider the following unit-zero divisor graphs of two different rings.

\begin{figure}[htt]
    \centering
  \begin{minipage}[b]{0.48\textwidth}
    \centering
            \begin{tikzpicture}
             \node[circle, draw, scale=0.8] (0) at (-0.7,1.2) {$\bar{0}$};
             \node[circle, draw, scale=0.8] (1) at (0.7,1.2) {$\bar{1}$};
             \node[circle, draw, scale=0.8] (2) at (-0.7,-1.2) {$\bar{2}$};
             \node[circle, draw, scale=0.8] (3) at (0.7,-1.2) {$\bar{3}$};
             \node[circle, draw, scale=0.8] (4) at (1.6,0) {$\bar{4}$};
             \node[circle, draw, scale=0.8] (5) at (-1.6,0) {$\bar{5}$};
            \Edge(0)(1) \Edge(0)(5)
            \Edge(1)(4) \Edge(2)(3)
            \Edge(2)(5) \Edge(3)(4)
            \end{tikzpicture}
            \captionof{figure}{$G_{UZ}(\mathbb{Z}_{6})$} \label{GambarGuz(Z6)}
\end{minipage}\hfill
\begin{minipage}[b]{0.48\textwidth}
    \centering
            \begin{tikzpicture}
             \node[circle, draw, scale=0.8] (0) at (3.5,0) {$\bar{0}$};
             \node[circle, draw, scale=0.8] (1) at (5,0) {$\bar{1}$};
             \node[circle, draw, scale=0.8] (2) at (6.5,0) {$\bar{2}$};
            \Edge(0)(1) \Edge(1)(2)
            \end{tikzpicture}
            \vspace{1.2cm}
            \captionof{figure}{$G_{UZ}(\mathbb{Z}_{3})$}
            \label{GambarGuz(Z3)}
\end{minipage}
\end{figure}

From Figure \ref{GambarGuz(Z6)} and Figure \ref{GambarGuz(Z3)}, we know that $G_{UZ}(\mathbb{Z}_{3})$ is not a regular graph like the graph $G_{UZ}(\mathbb{Z}_{6})$, hence its minimum degree differs from its maximum degree. Based on this condition, we provide a sufficient condition for the regularity of the unit-zero divisor graph. Before stating it, we establish a preliminary lemma that will be instrumental in proving the regularity of the graph under certain algebraic conditions.

\begin{lemma}\label{jumlahan 2 unit bukan unit}
    If $2 \notin U(R)$, then $u_1+u_2 \notin U(R)$ for all $u_1,u_2 \in U(R)$.
\end{lemma}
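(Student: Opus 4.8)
The plan is to prove the contrapositive: assuming $u_1+u_2\in U(R)$ for some $u_1,u_2\in U(R)$, I would deduce that $2\in U(R)$. The engine of the argument is reduction modulo a maximal ideal, using the fact recalled in the preliminaries that every non-unit of a commutative ring lies in some maximal ideal, together with the observation that a unit of $R$ can lie in no proper ideal.

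Concretely, suppose $2\notin U(R)$ and fix a maximal ideal $\mathfrak{m}$ containing $2$. Write $k=R/\mathfrak{m}$ for the residue field and $x\mapsto\overline{x}$ for the quotient map. Since $2\in\mathfrak{m}$, the field $k$ has characteristic $2$. Because $u_1,u_2\in U(R)$ cannot lie in the proper ideal $\mathfrak{m}$, the images $\overline{u}_1$ and $\overline{u}_2$ are nonzero in $k$. It then suffices to show $\overline{u}_1+\overline{u}_2=0$ in $k$: this places $u_1+u_2$ in $\mathfrak{m}$, hence $u_1+u_2\notin U(R)$, which is exactly the contrapositive. Since $\operatorname{char}k=2$, the relation $\overline{u}_1+\overline{u}_2=0$ is equivalent to $\overline{u}_1=\overline{u}_2$.

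The decisive step, which I expect to be the main obstacle, is establishing $\overline{u}_1=\overline{u}_2$ in $k$. This is automatic when $k\cong\mathbb{F}_2$, since then the only nonzero element of $k$ is $1$, forcing $\overline{u}_1=\overline{u}_2=1$ and $\overline{u}_1+\overline{u}_2=2=0$; this case already covers $R=\mathbb{Z}_n$, where $2\notin U(\mathbb{Z}_n)$ means $2\mid n$, every unit is odd, and the sum of two odd residues is even and hence not a unit. For a general finite commutative ring one would first write $R$ as a product of local rings and run the same analysis in the single local factor in which $2$ fails to be a unit, reducing everything to the local, characteristic-$2$ situation; making that reduction airtight — and pinning down exactly when $\overline{u}_1=\overline{u}_2$ is forced — is where the argument has to do its real work.
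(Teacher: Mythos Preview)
Your approach via reduction modulo a maximal ideal $\mathfrak{m}\ni 2$ is precisely the paper's: the paper chooses such an $\mathfrak{m}$, observes that $R/\mathfrak{m}$ has characteristic $2$, then asserts $R/\mathfrak{m}\cong\mathbb{Z}_2$ and defines the ``homomorphism'' $f\colon R\to\mathbb{Z}_2$ by $f(r)=0$ if $r\in\mathfrak{m}$ and $f(r)=1$ otherwise, concluding $f(u_1+u_2)=1+1=0$. In other words, the paper simply assumes away the very step you singled out as the obstacle.

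That obstacle is real, and in fact the lemma is false as stated. Take $R=\mathbb{F}_4$: here $2=0\notin U(R)$, yet for a generator $\alpha$ of $\mathbb{F}_4^{\times}$ we have $1,\alpha\in U(R)$ and $1+\alpha=\alpha^2\in U(R)$. The only maximal ideal is $\mathfrak{m}=\{0\}$, so $R/\mathfrak{m}\cong\mathbb{F}_4$ has characteristic $2$ but order $4$; distinct units need not have equal images, your desired equality $\overline{u}_1=\overline{u}_2$ fails, and the map $f$ above is not even a ring homomorphism. The hypothesis $2\notin U(R)$ yields only $\operatorname{char}(R/\mathfrak{m})=2$, not $|R/\mathfrak{m}|=2$, so neither your outline nor the paper's argument can be completed without an extra assumption guaranteeing that some residue field of $R$ is $\mathbb{F}_2$. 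Your instinct that ``pinning down exactly when $\overline{u}_1=\overline{u}_2$ is forced'' is where the real work lies was exactly right --- it simply cannot be done at this level of generality.
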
 
\begin{proof}
    Since $2 \notin U(R)$, $2$ is contained in a maximal ideal of $R$. Let $M$ be a maximal ideal of $R$ such that $2 \in M$. Then, the residue field $R/M$ has characteristic $2$. In particular, if $R/M$ is a finite field of order $2$, then $R/M \cong \mathbb{Z}_2$. Therefore, we can define a ring homomorphism $ f: R \rightarrow \mathbb{Z}_2 $ by
    \[
    f(r) =
    \begin{cases}
        0, & \text{if } r \in M, \\
        1, & \text{if } r \notin M,
    \end{cases}
    \]
    for each $ r \in R $. Let $ u_1, u_2 \in U(R) $. Then $ u_1, u_2 \notin M $, hence $ f(u_1) = 1 $ and $ f(u_2) = 1 $. It follows that $f(u_1 + u_2) = f(u_1) + f(u_2) = 1 + 1 = 0$. This implies $ u_1 + u_2 \in M $. Therefore, $ u_1 + u_2 \notin U(R) $.
\end{proof}

The above lemma shows that the sum of any two units is never a unit under the assumption that $2 \notin U(R)$. This observation plays a key role in analyzing the neighborhood of each vertex in $G_{UZ}(R)$, and leads directly to the following structural result.

\begin{theorem}\label{theorem if 2 is non-unit then U(R)-regular}
    If $2 \notin U(R)$, then $G_{UZ}(R)$ is a $|U(R)|-$regular graph. 
\end{theorem}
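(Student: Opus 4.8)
The plan is to upgrade Lemma~\ref{lemma about maximum degree in general} from an inequality to an equality: I want to show that under the hypothesis $2\notin U(R)$ every vertex of $G_{UZ}(R)$, not just $0$, has degree exactly $|U(R)|$. Fix an arbitrary vertex $x\in R$. Exactly as in the proof of Lemma~\ref{lemma about maximum degree in general}, any neighbor $y$ of $x$ satisfies $x+y\in U(R)$, so $y$ is forced to have the form $u-x$ for a (unique) $u\in U(R)$; since $u\mapsto u-x$ is injective, the neighborhood $N(x)$ is contained in the set $S_x=\{\,u-x : u\in U(R)\,\}$, which has exactly $|U(R)|$ elements. The whole proof then reduces to showing that each element of $S_x$ is actually a neighbor of $x$, i.e.\ that $u-x\neq x$ and $x(u-x)\in Z(R)$ for every $u\in U(R)$.

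The first of these two conditions I would dispatch with the small observation that $2\notin U(R)$ implies $2x\notin U(R)$ for \emph{every} $x\in R$ (if $2x$ were a unit, then $2$ would divide $1$ and hence be a unit). Consequently $u-x=x$ would give $2x=u\in U(R)$, which is impossible; so all $|U(R)|$ elements of $S_x$ are distinct from $x$.

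The substantive step — and the one I expect to be the main obstacle — is verifying $x(u-x)\in Z(R)$. Here I would argue by contradiction. Since in a finite commutative ring the non-units are precisely the zero divisors (including $0$), the only way $x(u-x)\notin Z(R)$ can fail is if $x(u-x)\in U(R)$. But a product lying in $U(R)$ forces each factor to lie in $U(R)$, so both $x\in U(R)$ and $u-x\in U(R)$; yet their sum is $x+(u-x)=u\in U(R)$, which directly contradicts Lemma~\ref{jumlahan 2 unit bukan unit}. Hence $x(u-x)\in Z(R)$, and therefore $u-x\in N(x)$. The key point making this work is that the candidate neighbor has exactly the shape $u-x$, so summing it with $x$ recovers the unit $u$ and makes Lemma~\ref{jumlahan 2 unit bukan unit} applicable.

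Putting the pieces together, $N(x)=S_x$ and $\deg_{G_{UZ}(R)}(x)=|U(R)|$ for every $x\in R$, so $G_{UZ}(R)$ is $|U(R)|$-regular, as claimed. Everything besides the contradiction argument in the third paragraph is bookkeeping; that step is where the hypothesis $2\notin U(R)$ is really used, via the preliminary Lemma~\ref{jumlahan 2 unit bukan unit}.
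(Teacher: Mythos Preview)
Your proof is correct and follows essentially the same approach as the paper: both set up the bijection $u\mapsto u-x$ between $U(R)$ and the neighborhood of $x$, use $2\notin U(R)$ to rule out $u-x=x$, and invoke Lemma~\ref{jumlahan 2 unit bukan unit} to force $x(u-x)\in Z(R)$. The only cosmetic difference is that the paper splits into the cases $x\in U(R)$ and $x\notin U(R)$, whereas you package the same logic as a single contradiction argument.
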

\begin{proof}
    Let $x \in R$, consequently $R + x = R$. Hence, for every $u \in U(R)$, there exists an element $x_u \in R$ such that $x_u + x = u$. Thus, $x_u$ is uniquely determined by $u$. Since $2 \notin U(R)$, it follows that $x_u \neq x$. There are two possibilities for $x$, namely $x \in U(R)$ or $x \notin U(R)$. If $x\notin U(R)$, we obtain $x\in Z(R)$ hence $xx_u\in Z(R)$. Therefore, $x_u$ is adjacent to $x$ in the graph $G_{UZ}(R)$. By Lemma \ref{jumlahan 2 unit bukan unit}, if $x \in U(R)$, then $x_u \notin U(R)$. This implies $x_u \in Z(R)$. Therefore, $xx_u \in Z(R)$ and consequently, $x_u$ is adjacent to $x$ in the graph $G_{UZ}(R)$. Let $N(x)$ be the set of all vertices adjacent to $x$. We define a mapping $f: U(R) \rightarrow N(x)$ by $f(u) = x_u$ for each $u\in U(R)$. Since each $x_u$ is uniquely determined by $u$, $f$ is bijective. Thus, $\text{deg}_{G_{UZ}(R)}(x) = |N(x)| = |U(R)|$. Therefore, $G_{UZ}(R)$ is a $|U(R)|$-regular graph.
\end{proof}

This theorem establishes that when the element $2$ is not a unit in $R$, each vertex in $G_{UZ}(R)$ has the same degree, equal to the number of units in $R$. This observation leads to an interesting consequence regarding the Eulerian property of the graph.

\begin{corollary}
    If $2 \notin U(R)$ and $|U(R)|$ are even, then $G_{UZ}(R)$ is a Eulerian graph. 
\end{corollary}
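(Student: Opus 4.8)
The plan is to apply Euler's Theorem from the preliminaries: a nontrivial connected graph is Eulerian exactly when every vertex has even degree. Two of its three hypotheses come for free. The graph is nontrivial and has an edge because $1\in U(R)$ and, as in Lemma~\ref{lemma about maximum degree in general}, $0$ is adjacent to every unit. And since $2\notin U(R)$, Theorem~\ref{theorem if 2 is non-unit then U(R)-regular} gives that $G_{UZ}(R)$ is $|U(R)|$-regular, so the hypothesis that $|U(R)|$ is even makes every vertex have even degree. The proof thus reduces entirely to showing that $G_{UZ}(R)$ is connected, which is where all the work sits.

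For connectedness, I would first use the simplification of adjacency that $2\notin U(R)$ provides: if $x+y\in U(R)$ then by Lemma~\ref{jumlahan 2 unit bukan unit} $x,y$ are not both units, so one of them is a non-unit and $xy\in Z(R)$ holds automatically. Hence, under $2\notin U(R)$, distinct vertices are adjacent iff their sum is a unit --- that is, $G_{UZ}(R)$ is the addition Cayley graph of $(R,+)$ with connection set $U(R)$. Using $N(x)=\{\,u-x : u\in U(R)\,\}$ from the proof of Theorem~\ref{theorem if 2 is non-unit then U(R)-regular}, walks leaving $0$ reach, after an even number of edges, exactly the additive subgroup $H:=\langle\, u-v : u,v\in U(R)\,\rangle$ of $(R,+)$ and, after an odd number of edges, exactly the coset $1+H$ (here one uses $U(R)\subseteq 1+H$, which holds since $u-1\in H$). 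So the component of $0$ equals $H\cup(1+H)$, and $G_{UZ}(R)$ is connected iff $H\cup(1+H)=R$, i.e.\ iff $H$ has index at most $2$ in $(R,+)$, with $1\notin H$ when that index is $2$.

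The real obstacle is this additive claim. The natural plan is to decompose $R\cong R_1\times\cdots\times R_k$ into finite local rings, observe that $H$ splits as $H_1\times\cdots\times H_k$, and treat each factor: for local $R_i$ with maximal ideal $\mathfrak{m}_i=Z(R_i)$ one has $1+\mathfrak{m}_i\subseteq U(R_i)$, so $\mathfrak{m}_i\subseteq H_i$, and then a short computation in the residue field shows $H_i=R_i$ unless $R_i/\mathfrak{m}_i\cong\mathbb{F}_2$, in which case $H_i=\mathfrak{m}_i$ has index $2$. Since indices multiply, $[R:H]\le 2$ precisely when at most one factor has residue field $\mathbb{F}_2$, and in that case the coordinate of $1$ in that factor lies outside $\mathfrak{m}_i$, so $1\notin H$; connectedness follows and Euler's Theorem finishes. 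I expect the delicate point to be exactly this count: if $R$ has two or more quotients isomorphic to $\mathbb{F}_2$, the component of $0$ is proper and the graph is disconnected --- for instance $R=\mathbb{Z}_2\times\mathbb{Z}_4$ has $2\notin U(R)$ and $|U(R)|=2$ but $G_{UZ}(R)$ is a disjoint union of two $4$-cycles. So the cleanest route is to prove the corollary under the additional assumption that $R$ has at most one residue field isomorphic to $\mathbb{F}_2$ (automatic when $R$ is local), and to organize the argument around establishing that reduction.
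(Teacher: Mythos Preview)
Your analysis goes well beyond the paper's. The paper's proof of this corollary is two sentences: it cites Theorem~\ref{theorem if 2 is non-unit then U(R)-regular} to get $|U(R)|$-regularity, observes that $|U(R)|$ is even so every vertex has even degree, and concludes ``Eulerian'' directly---connectedness is never addressed. You are right that this is precisely the hypothesis of Euler's Theorem that needs to be checked, and the paper simply omits it.

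More to the point, your counterexample $R=\ZZ_2\times\ZZ_4$ is correct: here $2=(0,2)\notin U(R)$ and $|U(R)|=|\{(1,1),(1,3)\}|=2$ is even, yet $G_{UZ}(R)$ is the disjoint union of two $4$-cycles and hence not Eulerian. So the corollary, as stated, is false, and the paper's proof has a genuine gap exactly where you located it. Your Cayley-graph description of the component of $0$ as $H\cup(1+H)$ with $H=\langle u-v:u,v\in U(R)\rangle$, the computation of $H$ through the decomposition into local factors, and the resulting characterization of connectedness (at most one local factor with residue field $\FF_2$) are all sound; your proposed additional hypothesis is the right repair.
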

\begin{proof}
    Since $R$ is a finite commutative ring with an identity element and $2\notin U(R)$, by Theorem \ref{theorem if 2 is non-unit then U(R)-regular}, $G_{UZ}(R)$ is a $|U(R)|$-regular graph. Since $|U(R)|$ is even, all vertices in $G_{UZ}(R)$ have even degree, thus $G_{UZ}(R)$ is an Eulerian graph.
\end{proof}

The above result follows that a graph is Eulerian if and only if all vertices have an even degree. It provides an elegant criterion for determining the Eulerian property of $G_{UZ}(R)$ based purely on the structure of the unit group.
Now consider the ring $\mathbb{Z}_9$ which is a local ring with the unique maximal ideal $\langle \bar{3} \rangle = \{ \bar{0}, \bar{3}, \bar{6} \}$. This maximal ideal also coincides with the set of zero-divisors in $\mathbb{Z}_9$. By analyzing the adjacency relations among the elements of the ring, we obtain that the unit-zero divisor graph $G_{UZ}(\mathbb{Z}_9)$ forms a complete bipartite graph. The bipartition reflects the structure of $\mathbb{Z}_9$ as a local ring with a unique maximal ideal, as illustrated in Figure~\ref{GambarGuz(Z9)}.

  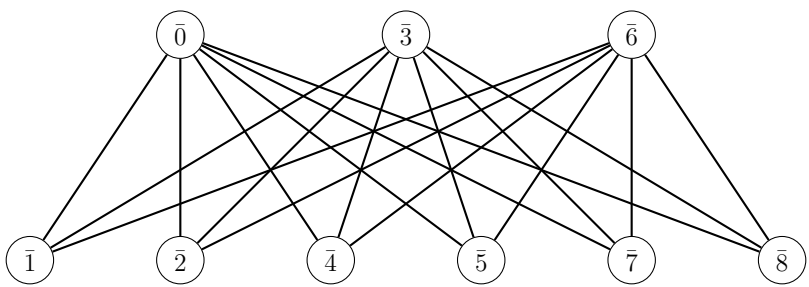
\begin{figure}[htt]
    \centering
    \begin{tikzpicture}
     \node[circle, draw, scale=0.8] (0) at (-3,2) {$\bar{0}$};
     \node[circle, draw, scale=0.8] (1) at (-5,-1) {$\bar{1}$};
     \node[circle, draw, scale=0.8] (2) at (-3,-1) {$\bar{2}$};
     \node[circle, draw, scale=0.8] (3) at (0,2) {$\bar{3}$};
     \node[circle, draw, scale=0.8] (4) at (-1,-1) {$\bar{4}$};
     \node[circle, draw, scale=0.8] (5) at (1,-1) {$\bar{5}$};
     \node[circle, draw, scale=0.8] (6) at (3,2) {$\bar{6}$};
     \node[circle, draw, scale=0.8] (7) at (3,-1) {$\bar{7}$};
     \node[circle, draw, scale=0.8] (8) at (5,-1) {$\bar{8}$};

    \Edge(0)(1) \Edge(0)(2) \Edge(0)(4) \Edge(0)(5)
    \Edge(0)(7) \Edge(0)(8) 
    \Edge(3)(1) \Edge(3)(2) \Edge(3)(4) \Edge(3)(5)
    \Edge(3)(7) \Edge(3)(8) 
    \Edge(6)(1) \Edge(6)(2) \Edge(6)(4) \Edge(6)(5)
    \Edge(6)(7) \Edge(6)(8) 
    \end{tikzpicture}
    \caption{$G_{UZ}(\mathbb{Z}_{9})$} \label{GambarGuz(Z9)}
    \end{figure}

In the following theorem, we give conditions under which $G_{UZ}(R)$ exhibits a bipartite structure, in general.
\begin{theorem}\label{theorem local implies complete bipartite}
    If $R$ is a local ring with $\mathfrak{m}$ as its maximal ideal, then $G_{UZ}(R)$ is a complete bipartite graph $K_{m,n}$, where $m=|\mathfrak{m}|$ and $n=|U(R)|$.
\end{theorem}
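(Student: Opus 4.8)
The plan is to show that the bipartition of $G_{UZ}(R)$ is given precisely by the two sets $\mathfrak{m}$ and $U(R)$, with every cross pair an edge and no pair inside either part an edge. Recall from the preliminaries that for a finite local ring we have a disjoint union $R = U(R) \cup \mathfrak{m}$ and the identity $\mathfrak{m} = Z(R)$; in particular $|R| = |\mathfrak{m}| + |U(R)| = m + n$, which matches the vertex count of $K_{m,n}$. Note also that $\mathfrak{m} \ni 0$ and $U(R) \ni 1$, so both parts are nonempty.

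First I would rule out edges inside $\mathfrak{m}$: if $x, y \in \mathfrak{m}$ are distinct, then $x + y \in \mathfrak{m}$ because $\mathfrak{m}$ is an ideal, so $x + y \notin U(R)$ and the adjacency condition fails. Next I would rule out edges inside $U(R)$: if $x, y \in U(R)$, then $xy \in U(R)$, hence $xy \notin Z(R)$ and again the adjacency condition fails. (This second step uses only the multiplicative half of the adjacency condition and is independent of whether $2$ is a unit.) Consequently $G_{UZ}(R)$ is bipartite with the two parts contained in $\mathfrak{m}$ and $U(R)$.

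The core of the argument is to show that every $x \in \mathfrak{m}$ is adjacent to every $u \in U(R)$. Such a pair is automatically distinct since $\mathfrak{m} \cap U(R) = \emptyset$. For the multiplicative condition, $xu \in \mathfrak{m} = Z(R)$ because $\mathfrak{m}$ is an ideal. For the additive condition I would show $x + u \in U(R)$: writing $x + u = u\,(1 + u^{-1}x)$ with $u^{-1}x \in \mathfrak{m}$, it suffices to see that $1 + z$ is a unit for every $z \in \mathfrak{m}$; indeed, if $1 + z$ were a non-unit it would lie in the unique maximal ideal $\mathfrak{m}$, forcing $1 = (1 + z) - z \in \mathfrak{m}$, a contradiction. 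Hence $x$ and $u$ are adjacent.

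Combining the three steps, the vertex set of $G_{UZ}(R)$ is partitioned as $\mathfrak{m} \sqcup U(R)$ with all $mn$ cross edges present and no edge inside either block, i.e. $G_{UZ}(R) \cong K_{m,n}$ with $m = |\mathfrak{m}|$ and $n = |U(R)|$. I do not anticipate a serious obstacle; the only point requiring care is the fact that a unit plus a radical element is a unit, for which I would rely on the characterization of $\mathfrak{m}$ as the unique maximal ideal rather than on any computation. A minor bookkeeping remark is to retain the convention already in force in the preliminaries that $0 \in Z(R)$ (since $\mathfrak{m} = Z(R)$ and $0 \in \mathfrak{m}$), so that products equal to $0$ still satisfy the zero-divisor requirement.
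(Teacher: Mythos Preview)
Your proposal is correct and follows essentially the same route as the paper: partition $R=\mathfrak{m}\cup U(R)$ using $\mathfrak{m}=Z(R)$, rule out edges inside each part via the additive and multiplicative halves of the adjacency condition respectively, and then verify that every cross pair is an edge. The only cosmetic difference is in the additive step for cross pairs: the paper argues directly that $u+v\notin\mathfrak{m}$ since $v\in\mathfrak{m}$ and $u\notin\mathfrak{m}$, whereas you first factor $x+u=u(1+u^{-1}x)$ and then apply the same maximal-ideal contradiction to $1+u^{-1}x$; your detour through the factorization is unnecessary but harmless.
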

\begin{proof}
    Since $R$ is finite, every element in $R\backslash Z(R)$ is a unit. This implies that $R$ forms the partition $\{Z(R),U(R)\}$. In addition, since $R$ is a local ring, its unique maximal ideal is $\mathfrak{m}=Z(R)$.

    Let $u$ and $v$ be two distinct vertices in $G_{UZ}(R)$. If $u,v \in Z(R)$, then $u+v\in Z(R)$ as $Z(R)=\mathfrak{m}$ is an ideal. Thus, $u+v$ is not a unit. Consequently, $u$ and $v$ are not adjacent. If $u,v \in U(R)$, then $uv\in U(R)$. Thus, $uv$ is not a zero divisor. This implies that $u$ and $v$ are not adjacent.
    
    Next, suppose that $u\in U(R)$ and $v\in Z(R)$. Since $u\notin Z(R)$ and $Z(R)$ is a unique maximal ideal, then $u+v \notin Z(R)$. Thus, $u+v\in U(R)$. Since $v\in Z(R)$ and $Z(R)$ is an ideal, we have $uv\in Z(R)$. Hence,  $u+v\in U(R)$ and $uv\in Z(R)$ for every $u\in U(R)$ and $v\in Z(R)$. Therefore, the graph $ G_{UZ}(R) $ is a complete bipartite graph $ K_{m,n} $ where $ m = |Z(R)| = |\mathfrak{m}| $ and $ n = |U(R)|$.
\end{proof}

This result highlights the fundamental bipartite property of $G_{UZ}(R)$ in the case of local rings, where the set of units and the maximal ideal naturally form two independent sets. The converse of this result is also true, as demonstrated in the following theorem.

\begin{theorem}\label{theorem if complete bipartite then local}
    If $G_{UZ}(R)$ is a complete bipartite graph, then $R$ is local.
\end{theorem}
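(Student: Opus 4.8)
The plan is to prove the statement in the form: assuming $G_{UZ}(R)$ is complete bipartite, I will show that the set $D := R\setminus U(R)$ of non-units is an ideal of $R$. Since, as recalled in the preliminaries, a commutative ring is local precisely when its set of non-units forms an ideal, this yields that $R$ is local.

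The first step is to pin down the two parts of the bipartition explicitly. Let $\{A,B\}$ be the bipartition of $G_{UZ}(R)$ and, without loss of generality, suppose $0\in A$. Since $0+u=u\in U(R)$ and $0\cdot u=0\in Z(R)$ for every $u\in U(R)$ (exactly as in the proof of Lemma~\ref{lemma about maximum degree in general}), the vertex $0$ is adjacent to every unit, so no unit lies in $A$; hence $U(R)\subseteq B$. Conversely, because the graph is \emph{complete} bipartite, every vertex $b\in B$ is adjacent to $0\in A$, which forces $0+b=b\in U(R)$; hence $B\subseteq U(R)$. Therefore $B=U(R)$ and $A=D$.

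The heart of the argument is then to exploit the absence of edges inside the part $A=D$. I will use two elementary facts about finite commutative rings: (a) if $a$ is a non-unit and $r\in R$, then $ra$ is a non-unit, since otherwise $a$ would divide a unit; and (b) consequently the product of any two non-units is a non-unit, and hence lies in $Z(R)$ (recall that $0\in Z(R)$ and that every non-unit of a finite commutative ring is a zero divisor). Now let $a,b\in D=A$ be distinct. They lie in the same part, so they are non-adjacent, meaning the adjacency condition fails for the pair $(a,b)$; but $ab\in Z(R)$ by (b), so the only way it can fail is that $a+b\notin U(R)$, i.e.\ $a+b\in D$. The remaining case $a=b$ is immediate from (a): $a+a=2\cdot a\in D$. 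Thus $D$ is closed under addition; together with (a), which also yields $-a=(-1)a\in D$, and the obvious $0\in D$, this shows that $D$ is an ideal. Since $1\notin D$, it is a proper ideal equal to the set of all non-units, so $R$ is local.

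I expect the one genuinely non-routine point to be the observation in (b): whenever at least one of $u,v$ is a non-unit, the product $uv$ is automatically a zero divisor, so the clause "$uv\in Z(R)$" in the definition of adjacency contributes no restriction, and non-adjacency of two non-units is \emph{equivalent} to their sum being a non-unit. Making this explicit is precisely what converts the combinatorial hypothesis that $A$ is an independent set into the additive closure of $D$; once that is in hand, the rest is routine bookkeeping.
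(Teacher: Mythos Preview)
Your proof is correct and follows essentially the same strategy as the paper's: identify the two parts of the bipartition as $U(R)$ and $Z(R)=R\setminus U(R)$, then use that any two non-units have product in $Z(R)$, so their non-adjacency forces their sum to be a non-unit, whence $Z(R)$ is an ideal and $R$ is local. The only notable difference is in how the bipartition is pinned down: you anchor at $0$ and use its adjacency to every unit (plus completeness) to get $B=U(R)$ directly, whereas the paper first places $U(R)$ in one part via the non-adjacency of units and then runs a short contradiction argument (ultimately also involving $0$) to rule out any extra elements in that part. Your route is a bit more streamlined, but the substance of the argument is the same.
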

\begin{proof}
    Let $G_{UZ}(R)$ be a complete bipartite graph $K_{m,n}$ with bipartition $\{M, N\}$.
     Let $x,y\in U(R)$, we have $xy\in U(R)$. Hence $xy \notin Z(R)$, which implies that $x$ and $y$ are not adjacent in $G_{UZ}(R)$. Without loss of generality, assume $U(R) \subseteq M$. As the ring $R$ is finite, every element in $R$ is either a unit or a zero divisor such that $R \backslash U(R) = Z(R)$. This implies $N\subseteq Z(R)$. Assume that there exists $m \in M \backslash U(R)$, which means $m \in Z(R)$ and $m$ must be adjacent to every element in $N$. Suppose that $n \in N$, then $m$ and $n$ are adjacent, which means $m+n \in U(R)$ and $mn \in Z(R)$. Since $m$ and $n$ are both zero divisors, $m$ and $n$ do not lie in the same ideal. Hence, $R$ has more than one ideal. Let $I_1$ and $I_2$ be ideals in $R$, then $0 \in I_1 \cap I_2$. For every $ u \in U(R) \subseteq M $, we have $ 0 + u = u \in U(R) $ and $ 0u = 0 \in Z(R) $. This implies that $ 0 $ is adjacent to $ u $. Hence $0$ cannot possibly lie in the partition $M$, which means $0 \in N$ and $0$ must be adjacent to $m \in M \backslash U(R)$. However, $0+m=m\in Z(R)$, leading to a contradiction. Therefore, $m \in M \backslash U(R)$ is impossible to exist. Thus, $M=U(R)$.
    
    Since $R$ is finite, we have $R\backslash U(R)=Z(R)$. As the graph $G_{UZ}(R)$ is a complete bipartite graph, the second set in the vertex partition is $N=Z(R)$. Let $a,b \in Z(R)$, then $a$ and $b$ are not adjacent in $G_{UZ}(R)$. However, since $ab \in Z(R)$, it follows that $a+b \notin U(R)$ or, in other words, $a+b \in Z(R)$. This implies that $-a\in Z(R)$ for every $a\in Z(R)$. Hence, $Z(R)$ is a subring of $R$. Due to $rz \in Z(R)$ for $r\in R$ and $z\in Z(R)$, $Z(R)$ is an ideal. Since $Z(R)=R\backslash U(R)$, there exists no ideal that properly contains $Z(R)$. Thus, $Z(R)$ is the unique maximal ideal in $R$. Therefore, $R$ is a local ring.
\end{proof}

The above theorem confirms that a complete bipartite structure in $G_{UZ}(R)$ strongly indicates that the underlying ring is local. A fascinating special case occurs when $G_{UZ}(R)$ exhibits the structure of a star graph.

\begin{corollary}\label{star iff R field}
    For every ring $R$, $G_{UZ}(R)$ is a star graph if and only if $R$ is a field. 
\end{corollary}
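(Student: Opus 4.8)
The plan is to read off both implications from Theorems~\ref{theorem local implies complete bipartite} and \ref{theorem if complete bipartite then local}, using the elementary fact that a star graph is exactly a complete bipartite graph $K_{m,n}$ one of whose parts is a single vertex, i.e.\ a $K_{1,k}$.

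For the forward implication, suppose $R$ is a field. Then $R$ is local with maximal ideal $\mathfrak{m}=\{0\}$ and $U(R)=R\setminus\{0\}$. Applying Theorem~\ref{theorem local implies complete bipartite} gives that $G_{UZ}(R)$ is the complete bipartite graph $K_{m,n}$ with $m=|\mathfrak{m}|=1$ and $n=|U(R)|=|R|-1$; that is, $G_{UZ}(R)\cong K_{1,\,|R|-1}$, a star.

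For the converse, suppose $G_{UZ}(R)$ is a star graph. In particular it is a complete bipartite graph, so Theorem~\ref{theorem if complete bipartite then local} forces $R$ to be local; let $\mathfrak{m}$ be its maximal ideal. Then Theorem~\ref{theorem local implies complete bipartite} identifies $G_{UZ}(R)$ with $K_{m,n}$, where $m=|\mathfrak{m}|$ and $n=|U(R)|$, and being a star means $\min(m,n)=1$. If $m=1$ then $\mathfrak{m}=\{0\}$, so $Z(R)=\mathfrak{m}=\{0\}$ and every nonzero element of $R$ is a unit, i.e.\ $R$ is a field. If instead $n=1$, I would argue that the translation map $\mathfrak{m}\to U(R)$, $x\mapsto 1+x$, is well defined (were $1+x$ a non-unit it would lie in $Z(R)=\mathfrak{m}$, whence $1=(1+x)-x\in\mathfrak{m}$, a contradiction) and clearly injective, so $|\mathfrak{m}|\le|U(R)|=1$; thus again $\mathfrak{m}=\{0\}$ and $R$ is a field.

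The only point requiring a little care is the case $n=|U(R)|=1$ in the converse: a priori the single vertex of the star could sit on the unit side of a bipartition with $|\mathfrak{m}|\ge 2$, and one must rule this out. The injection $x\mapsto 1+x$ of $\mathfrak{m}$ into $U(R)$ does exactly that, and everything else is a direct specialization of the two bipartiteness theorems already established.
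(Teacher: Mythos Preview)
Your proof is correct and follows essentially the same approach as the paper: both directions are read off from Theorems~\ref{theorem local implies complete bipartite} and~\ref{theorem if complete bipartite then local}, with the converse splitting into the cases $|\mathfrak{m}|=1$ and $|U(R)|=1$. The only difference is that in the case $|U(R)|=1$ the paper simply asserts this forces $R\cong\mathbb{Z}_2$, whereas your injection $x\mapsto 1+x$ from $\mathfrak{m}$ into $U(R)$ makes the argument explicit; your treatment of that edge case is in fact a bit cleaner.
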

\begin{proof}
    Suppose that $R$ is a field. Thus, $R$ is a local ring with $\{0\}$ as the maximal ideal, and all elements of $R\backslash \{0\}$ are units. Let $|R|=n$, by Theorem \ref{theorem local implies complete bipartite}, $G_{UZ}(R)$ is a complete bipartite graph $K_{1,(n-1)}$, or in other words, it is a star graph.
    
    Conversely, suppose that $G_{UZ}(R)$ is a star graph, then $G_{UZ}(R)$ can be viewed as a complete bipartite graph $K_{1,(n-1)}$ where $n=|R|$. By Theorem \ref{theorem if complete bipartite then local}, if $G_{UZ}(R)$ is a complete bipartite graph, then $R$ is a finite local commutative ring with identity and $Z(R)$ as its maximal ideal. Since all elements besides zero divisors in a finite ring are units, it can be assumed that $R$ is partitioned into $\{U(R), Z(R)\}$. There are two possibilities: first, $|Z(R)|=1$ and $|U(R)|=n-1$, which are satisfied if and only if $R$ is a field. The second possibility is that $|U(R)|=1$ and $|Z(R)|=n-1$, which is satisfied if and only if $R\cong \mathbb{Z}_2$. Therefore, $R$ must be a field.
\end{proof}

It follows that in a field, every non-zero element is a unit and the only zero divisor is the zero element, resulting in a starlike structure for the unit-zero divisor graph. This observation highlights the strong influence of the ring’s algebraic properties on the topology of $G_{UZ}(R)$. Building upon this foundational case, we next turn our attention to more intricate structural features of the graph. In particular, we investigate conditions under which $G_{UZ}(R)$ exhibits Eulerian or Hamiltonian properties, revealing deeper combinatorial characteristics tied to the ring's structure.

\begin{theorem}\label{eulerian condition if R local}
    Let $R$ be a finite local commutative ring with identity. Then $G_{UZ}(R)$ is an Eulerian graph if and only if both $|R|$ and $|U(R)|$ are even.
\end{theorem}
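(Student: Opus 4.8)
The plan is to reduce the statement to a parity computation on a complete bipartite graph. Since $R$ is a finite local commutative ring with identity, Theorem~\ref{theorem local implies complete bipartite} applies directly and gives $G_{UZ}(R) \cong K_{m,n}$ with $m = |\mathfrak{m}|$ and $n = |U(R)|$, where $\mathfrak{m} = Z(R)$ is the unique maximal ideal. Because $0 \in \mathfrak{m}$ and $1 \in U(R)$, both parts are nonempty, so $K_{m,n}$ is connected and has at least one edge; in particular Euler's theorem is applicable to it.

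Next I would invoke Euler's theorem: a nontrivial connected graph is Eulerian if and only if every vertex has even degree. In $K_{m,n}$ the $m$ vertices of the $\mathfrak{m}$-part each have degree $n$, while the $n$ vertices of the $U(R)$-part each have degree $m$. Hence $G_{UZ}(R)$ is Eulerian if and only if both $m$ and $n$ are even.

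Finally I would translate this condition back into the invariants $|R|$ and $|U(R)|$. Since $R$ is the disjoint union $\mathfrak{m} \cup U(R)$, we have $|R| = m + n$ and $|U(R)| = n$. If $m$ and $n$ are both even, then $|U(R)| = n$ is even and $|R| = m + n$ is even; conversely, if $|R|$ and $|U(R)|$ are both even, then $n = |U(R)|$ is even and $m = |R| - |U(R)|$ is even. Thus "$m$ and $n$ both even" is equivalent to "$|R|$ and $|U(R)|$ both even", which yields the claimed equivalence.

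As for difficulty, there is essentially no serious obstacle: the content is carried entirely by Theorem~\ref{theorem local implies complete bipartite} together with Euler's theorem, the rest being elementary parity arithmetic. The only points requiring a moment of care are checking the hypotheses of Euler's theorem (connectedness and nontriviality of $K_{m,n}$, which hold because $m,n \geq 1$) and the degenerate case $|U(R)| = 1$, i.e. $R \cong \mathbb{Z}_2$, where $n = 1$ is odd so the graph is not Eulerian — consistent with $|U(R)|$ being odd.
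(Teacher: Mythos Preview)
Your proposal is correct and follows essentially the same approach as the paper: invoke Theorem~\ref{theorem local implies complete bipartite} to identify $G_{UZ}(R)$ with $K_{m,n}$, apply Euler's theorem to reduce to the parity of $m$ and $n$, and use $|R|=m+n$ to translate the condition. If anything, you are slightly more careful than the paper in explicitly verifying the connectedness and nontriviality hypotheses of Euler's theorem.
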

\begin{proof}
    By Theorem \ref{theorem local implies complete bipartite}, if $R$ is a finite local commutative ring with an identity element,  $G_{UZ}(R)$ is a complete bipartite graph with bipartition $\{U(R), Z(R)\}$. Suppose that $|R|$ is even and $|U(R)|$ is even, thus $|Z(R)|=|R|-|U(R)|$ is also even. This implies that every vertex in $G_{UZ}(R)$ has an even degree. By Euler's Theorem, we can conclude that $G_{UZ}(R)$ is Eulerian. 
    
    Conversely, suppose that the graph $G_{UZ}(R)$ is Eulerian, hence each of its vertices must have an even degree. Because the graph $G_{UZ}(R)$ is a complete bipartite graph, the number of vertices in each subset of the partition, in this case $U(R)$ and $Z(R)$, must be even. It shows that $|U(R)|$ is even and $|R|=|U(R)|+|Z(R)|$ is also even.
\end{proof}

\begin{theorem}\label{hamiltonian condition if R local}
    Let $R$ be a finite local commutative ring with identity. Then $G_{UZ}(R)$ is a Hamiltonian graph if and only if $|U(R)|=|Z(R)|$.
\end{theorem}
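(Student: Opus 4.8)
The plan is to reduce the statement to the classical characterization of Hamiltonicity for complete bipartite graphs, which is recalled in Section~2. The one genuinely ring-theoretic input is Theorem~\ref{theorem local implies complete bipartite}: since $R$ is a finite local commutative ring with maximal ideal $\mathfrak m$, the graph $G_{UZ}(R)$ is the complete bipartite graph $K_{m,n}$ with partite sets exactly $Z(R)=\mathfrak m$ and $U(R)$, so that $m=|\mathfrak m|=|Z(R)|$ and $n=|U(R)|$. Everything after invoking this theorem is pure graph theory, so I would state the reduction first and then argue both implications on the level of $K_{m,n}$.

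For the forward implication, suppose $G_{UZ}(R)$ is Hamiltonian. Since the graph is bipartite, any cycle — in particular a spanning one — alternates between the two partite classes and therefore uses equally many vertices from each; hence $|Z(R)|=|U(R)|$, which is the claim. (Alternatively, one may just quote the recalled fact that $K_{m,n}$ is Hamiltonian only if $m=n\ge 2$.) For the reverse implication, suppose $|U(R)|=|Z(R)|$, so $m=n$; it remains to confirm $m=n\ge 2$ in order to apply the criterion that $K_{m,n}$ with $m=n\ge 2$ is Hamiltonian. Since $0\in Z(R)$ and $1\in U(R)$, both classes are nonempty, so $m=n\ge 1$, and writing $Z(R)=\{z_1,\dots,z_m\}$, $U(R)=\{u_1,\dots,u_m\}$ the walk $z_1u_1z_2u_2\cdots z_mu_mz_1$ is a Hamiltonian cycle whenever $m\ge 2$.

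I do not expect a serious obstacle: once Theorem~\ref{theorem local implies complete bipartite} is in hand, the result is essentially the complete-bipartite Hamiltonicity criterion transcribed. The only point needing care is the degenerate case $m=n=1$, which forces $|R|=2$, i.e. $R\cong\mathbb Z_2$ and $G_{UZ}(R)\cong K_2$; with the standard convention that a Hamiltonian cycle has length at least $3$, $K_2$ is not Hamiltonian, so the statement should be understood with the tacit hypothesis $|U(R)|=|Z(R)|\ge 2$. I would either add this hypothesis explicitly or dispose of $R\cong\mathbb Z_2$ by hand at the start of the proof, and otherwise keep the argument as the short two-line reduction above.
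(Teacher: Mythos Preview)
Your proposal is correct and follows essentially the same route as the paper: invoke Theorem~\ref{theorem local implies complete bipartite} to identify $G_{UZ}(R)$ with $K_{|Z(R)|,|U(R)|}$, then apply the Hamiltonicity criterion for complete bipartite graphs recalled in Section~2. If anything, you are slightly more careful than the paper's own proof, which silently ignores the degenerate case $R\cong\mathbb Z_2$ (where $|U(R)|=|Z(R)|=1$ but $K_{1,1}$ is not Hamiltonian); your suggestion to handle this case explicitly or add the hypothesis $|U(R)|=|Z(R)|\ge 2$ is well-taken.
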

\begin{proof}
    By Theorem \ref{theorem local implies complete bipartite}, if $R$ is a finite local commutative ring with an identity element,  $G_{UZ}(R)$ is a complete bipartite graph with bipartition $\{U(R), Z(R)\}$. A complete bipartite graph is Hamiltonian if and only if the number of vertices in both partitions must be the same. Thus, the graph $G_{UZ}(R)$ is a Hamiltonian graph if and only if $|U(R)|=|Z(R)|$.
\end{proof}

Next, we examine the planarity of $G_{UZ}(R)$ in the case where $R$ is a finite local ring. Planarity, as a fundamental topological property, provides insight into how densely connected the graph is. The following theorem characterizes precisely when $G_{UZ}(R)$ can be drawn in the plane without edge crossings in terms of the number of units and zero divisors in $R$.

\begin{theorem}\label{planarity condition if R local}
    Let $R$ be a finite local commutative ring with identity. Then $G_{UZ}(R)$ is a planar graph if and only if $|U(R)|\leq 2$ or $|Z(R)|\leq 2$.
\end{theorem}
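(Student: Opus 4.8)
The plan is to reduce the statement to the classical planarity criterion for complete bipartite graphs by way of Theorem~\ref{theorem local implies complete bipartite}. Since $R$ is a finite local ring, that theorem gives $G_{UZ}(R) \cong K_{m,n}$ with $m = |\mathfrak{m}| = |Z(R)|$ and $n = |U(R)|$; observe also that $m \geq 1$ and $n \geq 1$ because $0 \in Z(R)$ and $1 \in U(R)$. Hence it suffices to prove that $K_{m,n}$ is planar if and only if $\min\{m,n\} \leq 2$, which is exactly the condition $|U(R)| \leq 2$ or $|Z(R)| \leq 2$.

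For the ``if'' direction I would assume without loss of generality that $n \leq 2$, the two partition classes playing symmetric roles. If $n = 1$ then $K_{m,1}$ is a star, which is visibly planar. If $n = 2$, label the two vertices of the size-$2$ class $a$ and $b$, place $a$ and $b$ at the top and bottom of the plane, and place the $m$ vertices of the other class along a vertical segment between them, joining each one to $a$ by an arc on the left and to $b$ by an arc on the right. This gives an embedding with no edge crossings, so $K_{m,2}$ is planar, and therefore $G_{UZ}(R)$ is planar whenever $|U(R)| \leq 2$ or $|Z(R)| \leq 2$.

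For the ``only if'' direction I would argue by contraposition: suppose $|U(R)| \geq 3$ and $|Z(R)| \geq 3$, so that $m, n \geq 3$. Selecting any three vertices from each partition class of $K_{m,n}$ produces a subgraph isomorphic to $K_{3,3}$, which is trivially a subdivision of itself. By Kuratowski's Theorem, any graph containing a subdivision of $K_{3,3}$ as a subgraph is non-planar, so $G_{UZ}(R)$ is non-planar. Combining the two directions yields the stated equivalence.

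There is essentially no serious obstacle here; the only points requiring care are (i) transporting the hypothesis that $R$ is local into the complete bipartite description of $G_{UZ}(R)$ with the correct parameters $m = |Z(R)|$ and $n = |U(R)|$, and (ii) describing the $K_{m,2}$ embedding cleanly enough that the reader can see it is crossing-free. All the substantive work is already packaged in Theorem~\ref{theorem local implies complete bipartite} and in Kuratowski's Theorem as recalled in Section~2.
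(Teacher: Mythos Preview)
Your proposal is correct and follows essentially the same approach as the paper: reduce to $G_{UZ}(R)\cong K_{m,n}$ via Theorem~\ref{theorem local implies complete bipartite} and then invoke Kuratowski's Theorem for the planarity of complete bipartite graphs. The paper's version is terser, simply asserting that $K_{m,n}$ avoids a $K_{3,3}$ subdivision if and only if one part has at most two vertices, whereas you spell out the planar embedding of $K_{m,2}$ and the $K_{3,3}$ subgraph explicitly.
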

\begin{proof}
    By Theorem \ref{theorem local implies complete bipartite}, if $R$ is a finite local commutative ring with an identity element, then $G_{UZ}(R)$ is a complete bipartite graph $K_{m,n}$ with $m=|Z(R)|$ and $n=|U(R)|$. This implies that the graph $G_{UZ}(R)$ does not contain a subgraph that is a subdivision of $K_{3,3}$ if and only if $|U(R)|\leq 2$ or $|Z(R)|\leq 2$. By Kuratowski's Theorem, we conclude that $G_{UZ}(R)$ is a planar graph if and only if $|U(R)|\leq 2$ or $|Z(R)|\leq 2$.
\end{proof}

Finally, we summarize several structural parameters of $G_{UZ}(R)$ in the local ring case.

\begin{corollary}
    If $R$ is a local ring, then
    \begin{enumerate}[label=(\roman*)]
        \item diameter of $G_{UZ}(R)$ is 2,
        \item girth of $G_{UZ}(R)$ is 4,
        \item clique number of $G_{UZ}(R)$ is 2,
        \item chromatic number of $G_{UZ}(R)$ is 2,
        \item independence number of $G_{UZ}(R)$ is $max(|U(R)|,|Z(R)|)$,
        \item domination number of $G_{UZ}(R)$ is 2.
    \end{enumerate}
\end{corollary}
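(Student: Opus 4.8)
The plan is to reduce everything to a statement about complete bipartite graphs and then read off each parameter. By Theorem~\ref{theorem local implies complete bipartite}, if $R$ is local with maximal ideal $\mathfrak{m}$, then $G_{UZ}(R)\cong K_{m,n}$ with $m=|\mathfrak{m}|=|Z(R)|$ and $n=|U(R)|$. Since $0\in Z(R)$ and $1\in U(R)$ we always have $m,n\geq 1$; to make the six listed values correct I would moreover take $R$ to be local but \emph{not} a field --- equivalently $\mathfrak{m}\neq\{0\}$, so $m=|Z(R)|\geq 2$ --- and observe that a finite local ring other than $\mathbb{F}_2$ has $|U(R)|\geq 2$, so $n\geq 2$ as well. (When $R$ is a field, $G_{UZ}(R)$ is a star by Corollary~\ref{star iff R field}, and clauses (ii) and (vi) genuinely change, so I would flag this case explicitly rather than let it pass silently.) Thus in the main case we are analysing $K_{m,n}$ with $m,n\geq 2$, and the corollary becomes a list of standard facts.

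Then I would check the six items in turn. For (i), two vertices in the same part share a common neighbour in the other part (distance $2$) while vertices in opposite parts are adjacent, and not all vertices are adjacent, so the diameter is exactly $2$. For (ii), $K_{m,n}$ is bipartite, hence has no odd cycle, and with $m,n\geq 2$ it contains a $4$-cycle $u_1v_1u_2v_2u_1$, so the girth is $4$. For (iii) and (iv), a bipartite graph with at least one edge is triangle-free, so its clique number is $2$, and the bipartition gives a proper $2$-colouring while one edge already forces $2$ colours, so the chromatic number is $2$. For (v), each part is an independent set, so $\alpha\geq\max(m,n)$; conversely any independent set lies in a single part, since any two vertices from opposite parts are adjacent, so $\alpha\leq\max(m,n)$, yielding $\alpha=\max(|U(R)|,|Z(R)|)$. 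For (vi), a single vertex in one part fails to dominate the remaining $\geq 1$ vertices of its own part, so $\gamma\geq 2$, while one vertex chosen from each part dominates the whole graph, so $\gamma=2$.

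There is no real mathematical obstacle here once Theorem~\ref{theorem local implies complete bipartite} is available; the only genuine care required is the bookkeeping for small or degenerate rings (fields, and $\mathbb{F}_2$, where $G_{UZ}(R)$ collapses to a star or a single edge). Accordingly I would either strengthen the hypothesis to ``$R$ is a finite local ring that is not a field'' or, at a minimum, record in the proof which clauses survive in the degenerate cases. Everything else is a direct appeal to the well-known parameters of $K_{m,n}$.
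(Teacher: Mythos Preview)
Your approach is exactly the paper's: invoke Theorem~\ref{theorem local implies complete bipartite} to get $G_{UZ}(R)\cong K_{m,n}$ and then read off the standard parameters of a complete bipartite graph. The paper's own proof is in fact a single sentence to this effect, so your version is more detailed; in particular, your caveat about the field case (where $m=1$ makes the girth infinite and the domination number $1$) is a genuine refinement that the paper does not address.
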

\begin{proof}
    By Theorem \ref{theorem local implies complete bipartite}, if $R$ is a local ring, then $G_{UZ}(R)$ is a complete bipartite graph. Consequently, the values of these graph parameters follow directly.
\end{proof}

Previously, we established that if $R$ is a local ring, then $G_{UZ}(R)$ forms a complete bipartite graph. A natural extension of this result is to ask whether weaker conditions might still guarantee that $G_{UZ}(R)$ is bipartite, even if not necessarily complete. The following theorem provides conditions under which the bipartite nature of the graph is preserved.

\begin{theorem}\label{bipartite condition}
    If $R$ is a local ring or $2\notin U(R)$, then $G_{UZ}(R)$ is a bipartite graph.
\end{theorem}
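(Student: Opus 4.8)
The plan is to prove the two hypotheses separately, since they are powered by different mechanisms, and in each case to exhibit an explicit partition of the vertex set $R$ into two independent sets. If $R$ is local with maximal ideal $\mathfrak m$, nothing new is needed: Theorem~\ref{theorem local implies complete bipartite} already says $G_{UZ}(R)$ is a complete bipartite graph with parts $\mathfrak m = Z(R)$ and $U(R)$, and a complete bipartite graph is in particular bipartite. So the content of the statement is entirely in the case $2\notin U(R)$, which I treat next.

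Assume $2\notin U(R)$. The idea is to two-colour $R$ by membership in a suitably chosen maximal ideal. Since $2$ is a non-unit of the finite ring $R$, it lies in some maximal ideal $M$; fix one. I claim $\{M,\; R\setminus M\}$ is a bipartition of $G_{UZ}(R)$, i.e.\ every edge has exactly one endpoint in $M$. Let $x$ and $y$ be adjacent, so in particular $x+y\in U(R)$. Since no unit lies in a proper ideal, $x+y\notin M$, which already rules out $x,y\in M$ (that would force $x+y\in M$). For the remaining possibility $x,y\in R\setminus M$, I would invoke the characteristic-$2$ argument already used in Lemma~\ref{jumlahan 2 unit bukan unit}: because $2\in M$, the residue field $R/M$ has characteristic $2$, giving a ring homomorphism $f\colon R\to\mathbb Z_2$ with $\ker f=M$, so $f$ sends $M$ to $0$ and $R\setminus M$ to $1$. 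If $x,y\in R\setminus M$ then $f(x+y)=1+1=0$, i.e.\ $x+y\in M$, contradicting $x+y\in U(R)$. Hence exactly one of $x,y$ lies in $M$; in particular $M$ is independent (its pairwise sums stay in $M$) and $R\setminus M$ is independent (its pairwise sums land in $M$ by the same computation), so $G_{UZ}(R)$ is bipartite. Notice that only the ``$x+y\in U(R)$'' half of the adjacency condition is used; the zero-divisor condition plays no role in bipartiteness.

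Everything here is routine except one input: that the maximal ideal $M$ containing $2$ may be taken with residue field exactly $\mathbb Z_2$, equivalently the existence of the map $f\colon R\to\mathbb Z_2$. This is precisely the delicate point underlying Lemma~\ref{jumlahan 2 unit bukan unit}, so I would lean on that lemma rather than re-deriving it, and I expect it to be the main obstacle to making the argument airtight; once it is granted, the proof collapses to the two elementary facts that units avoid maximal ideals and that an ideal is closed under addition. The local case contributes no difficulty beyond citing Theorem~\ref{theorem local implies complete bipartite}.
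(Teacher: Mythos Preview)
Your proposal is correct and follows essentially the same approach as the paper: cite Theorem~\ref{theorem local implies complete bipartite} for the local case, and for $2\notin U(R)$ choose a maximal ideal $M$ containing $2$, use the homomorphism $R\to\mathbb{Z}_2$ with kernel $M$ (exactly the device from Lemma~\ref{jumlahan 2 unit bukan unit}), and verify that $M$ and $R\setminus M$ are each independent by showing sums within either part land in $M$ and hence are non-units. Your remark that the only nontrivial input is the existence of that homomorphism (equivalently $|R/M|=2$) matches precisely what the paper leans on.
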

\begin{proof}
    Firstly, suppose that $R$ is a finite local commutative ring with an identity element. By Theorem \ref{theorem local implies complete bipartite}, $G_{UZ}(R)$ is a bipartite graph.

    Secondly, since $2 \notin U(R)$, $2$ is contained in a maximal ideal of $R$. Let $M$ be a maximal ideal of $R$ such that $2 \in M$. Then, the residue field $R/M$ has characteristic $2$. In particular, if $R/M$ is a finite field of order $2$, then $R/M \cong \mathbb{Z}_2$. Therefore, we can define a homomorphism $f:R \rightarrow \mathbb{Z}_2$ where
    \[
    f(r) =
    \begin{cases}
    0, & \text{if } r \in M, \\
    1,  & \text{if } r \notin M
    \end{cases}
    \]
    for every $r\in R$. Assume that the vertices of $G_{UZ}(R)$ are partitioned into $M$ and $R\backslash M$. Next, we will show that two vertices within the same partition are not adjacent to each other.
    Let $a_1,a_2 \in M$, it is clear that $a_1+a_2\in M$ so that $a_1+a_2$ is a zero divisor. Consequently, $a_1$ and $a_2$ are not adjacent.
    Let  $r_1,r_2 \in R\backslash M$, we have $f(r_1 + r_2)=f(r_1)+f(r_2)=1+1=0$ so that $r_1 + r_2 \in M$. That means $r_1 + r_2$ is not a unit, which implies that $r_1$ is not adjacent to $r_2$.
    Thus, $G_{UZ}(R)$ is a bipartite graph.
\end{proof}

Extending the previous result, we investigate a more general classification of the graph based on the number of maximal ideals of ring $R$. Consider the ring $\mathbb{Z}_{15} $. The set of units in $ \mathbb{Z}_{15} $ is  
$
U(\mathbb{Z}_{15}) = \{\bar{1}, \bar{2}, \bar{4}, \bar{7}, \bar{8}, \bar{11}, \bar{13}, \bar{14}\}.
$  
The ring $ \mathbb{Z}_{15} $ has two distinct maximal ideals, namely $ \langle \bar{3} \rangle = \{\bar{0}, \bar{3}, \bar{6}, \bar{9}, \bar{12} \} $ and $ \langle \bar{5} \rangle = \{\bar{0}, \bar{5}, \bar{10} \} $. By observation of the adjacency among the vertices, we obtain that the unit-zero divisor graph $ G_{UZ}(\mathbb{Z}_{15}) $ forms a $3$-partite graph. The vertex set of the graph can be partitioned into three disjoint subsets $\{ U(\mathbb{Z}_{15}),\ \langle \bar{3} \rangle,\ \langle \bar{5} \rangle \setminus \{\bar{0}\} \}$. This observation demonstrates that the number of maximal ideals in a ring has a significant influence on the partitioning of the unit-zero divisor graph associated with the ring, as illustrated in Figure~\ref{GambarGuz(Z15)}.

\begin{figure}[htt]
    \centering
    \begin{tikzpicture}
     \node[circle, draw, scale=0.8] (0) at (-6,3) {$\bar{0}$};
     \node[circle, draw, scale=0.8] (1) at (-4.5,0) {$\bar{1}$};
     \node[circle, draw, scale=0.8] (2) at (-3,0) {$\bar{2}$};
     \node[circle, draw, scale=0.8] (3) at (-3,3) {$\bar{3}$};
     \node[circle, draw, scale=0.8] (4) at (-1.5,0) {$\bar{4}$};
     \node[circle, draw, scale=0.8] (5) at (-4,-3) {$\bar{5}$};
     \node[circle, draw, scale=0.8] (6) at (0,3) {$\bar{6}$};
     \node[circle, draw, scale=0.8] (7) at (0,0) {$\bar{7}$};
     \node[circle, draw, scale=0.8] (8) at (1.5,0) {$\bar{8}$};
     \node[circle, draw, scale=0.8] (9) at (3,3) {$\bar{9}$};
     \node[circle, draw, scale=0.8] (10) at (4,-3) {$\bar{10}$};
     \node[circle, draw, scale=0.8] (11) at (3,0) {$\bar{11}$};
     \node[circle, draw, scale=0.8] (12) at (6,3) {$\bar{12}$};
     \node[circle, draw, scale=0.8] (13) at (4.5,0) {$\bar{13}$};
     \node[circle, draw, scale=0.8] (14) at (6,0) {$\bar{14}$};

    \Edge(0)(1) \Edge(0)(2) \Edge(0)(4) \Edge(0)(7)
    \Edge(0)(8) \Edge(0)(11) \Edge(0)(13) \Edge(0)(14)
    \Edge(1)(3) \Edge(1)(6) \Edge(1)(10) \Edge(1)(12)
    \Edge(2)(5) \Edge(2)(6) \Edge(2)(9) \Edge(2)(12)
    \Edge(3)(13) \Edge(3)(14) \Edge(3)(1) \Edge(3)(4)
    \Edge(3)(5) \Edge(3)(8) \Edge(3)(10) \Edge(3)(11)
    \Edge(4)(12) \Edge(4)(9) \Edge(4)(10)
    \Edge(5)(11) \Edge(5)(12) \Edge(5)(14) \Edge(5)(6)
    \Edge(5)(8) \Edge(5)(9)
    \Edge(6)(10) \Edge(6)(11) \Edge(6)(13) \Edge(6)(7) 
    \Edge(6)(8)
    \Edge(7)(9) \Edge(7)(10) \Edge(7)(12)
    \Edge(8)(9) 
    \Edge(9)(10) \Edge(9)(13) \Edge(9)(14)
    \Edge(10)(12) \Edge(10)(13)
    \Edge(11)(12) 
    \Edge(12)(14)
    \end{tikzpicture}
    \caption{$G_{UZ}(\mathbb{Z}_{15})$} \label{GambarGuz(Z15)}
\end{figure}
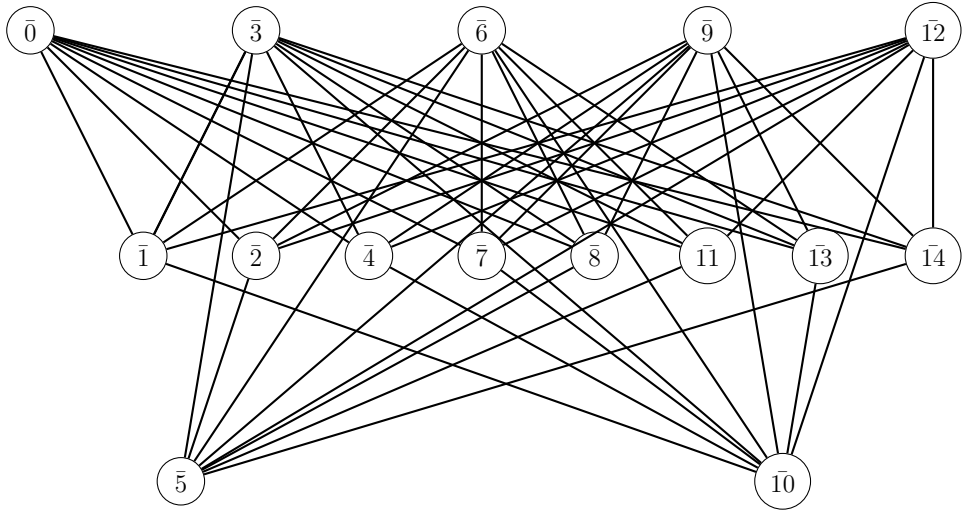

In the following theorem, we give the classification of $G_{UZ}(R)$ based on the number of maximal ideals of the ring $R$. 
\begin{theorem}\label{the condition for Guz is n+1 partite}
    If $R$ has $n$ maximal ideals, then $G_{UZ}(R)$ is a $(n+1)$-partite graph.
\end{theorem}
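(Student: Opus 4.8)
The plan is to exhibit an explicit partition of the vertex set $R$ of $G_{UZ}(R)$ into $n+1$ independent sets. Enumerate the maximal ideals of $R$ as $\mathfrak{m}_1,\dots,\mathfrak{m}_n$ and set
\[
P_0 = U(R), \qquad P_i = \mathfrak{m}_i \setminus \bigl(\mathfrak{m}_1 \cup \cdots \cup \mathfrak{m}_{i-1}\bigr) \quad\text{for } 1 \le i \le n.
\]
First I would verify that $\{P_0, P_1, \dots, P_n\}$ really is a partition of $R$. Pairwise disjointness is built in: $P_0$ meets no maximal ideal, since a unit is never a non-unit, while for $i\ge 1$ each non-unit is placed in the part $P_i$ with $i$ minimal such that it lies in $\mathfrak{m}_i$, so $P_1,\dots,P_n$ are mutually disjoint. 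These sets cover $R$ because, as recalled in the preliminaries, every non-unit of a finite commutative ring lies in some maximal ideal; hence $R = U(R)\cup\bigcup_{i=1}^n\mathfrak{m}_i = P_0\cup\bigcup_{i=1}^n P_i$.

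Next I would check that each part is an independent set of $G_{UZ}(R)$. For $P_0=U(R)$: if $u,v\in U(R)$ then $uv\in U(R)$, so $uv\notin Z(R)$; the second adjacency condition fails, and $u$ is not adjacent to $v$ --- the same observation already used in the proof of Theorem~\ref{theorem local implies complete bipartite}. For $P_i$ with $i\ge 1$: if $x,y\in P_i\subseteq\mathfrak{m}_i$ then $x+y\in\mathfrak{m}_i$, a proper ideal, so $x+y\notin U(R)$; the first adjacency condition fails, and $x$ is not adjacent to $y$. Thus no edge of $G_{UZ}(R)$ has both endpoints in a single part, so $\{P_0,\dots,P_n\}$ witnesses that $G_{UZ}(R)$ is $(n+1)$-partite. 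If one insists the parts be nonempty, note $1\in P_0$, and for $i\ge 1$ prime avoidance (maximal ideals are prime) gives $\mathfrak{m}_i\not\subseteq\mathfrak{m}_1\cup\cdots\cup\mathfrak{m}_{i-1}$, whence $P_i\ne\emptyset$.

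I expect no serious obstacle; the single point requiring care is that distinct maximal ideals are never disjoint --- they all contain $0$, and their pairwise intersections may be larger --- so the naive family $\{U(R),\mathfrak{m}_1,\dots,\mathfrak{m}_n\}$ is not a partition. The remedy is precisely the greedy disjointification above, and it costs nothing: the independence of $P_i$ used only the inclusion $P_i\subseteq\mathfrak{m}_i$, and any subset of an ideal still has the property that sums of its members are non-units. In particular the argument is purely ideal-theoretic and does not need the Chinese Remainder decomposition of $R$ into local rings; the graph $G_{UZ}(\mathbb{Z}_{15})$ displayed above is exactly this construction with $\mathfrak{m}_1=\langle\bar{3}\rangle$ and $\mathfrak{m}_2=\langle\bar{5}\rangle$.
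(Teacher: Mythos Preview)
Your proposal is correct and follows essentially the same approach as the paper: the paper also partitions $R$ into $U(R)$ together with the successively disjointified pieces $I_1,\, I_2\setminus I_1,\, I_3\setminus(I_1\cup I_2),\,\dots$, and verifies independence of each part via $uv\in U(R)$ for units and $x+y\in I_i$ for elements of a common maximal ideal. Your write-up is in fact a bit tidier, invoking prime avoidance explicitly for non-emptiness where the paper simply asserts it.
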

\begin{proof}
    Let $\mathcal{M}=\{I_i \mid i=\{1,2,..,n\}$ be the set of all maximal ideals in $R$ and form a partition on the vertex set $V(G_{UZ}(R))$, namely $P=\{U(R), I_1, I^*_2, I^*_3, ... , I^*_n\}$ where $ I^*_2=I_2 - I_1, I^*_3=I_3 - (I_1 \cup I_2), ... , I^*_n=I_n- (\bigcup _{i=1}^{n-1} I_i)$. For every $k\leq n$, the set $I^*_k$ is non-empty because each $I_i$ is a maximal ideal. Since $R$ is a finite commutative ring with an identity element, every element in $R$ must be either a unit or a zero divisor. Because every non-unit element in $R$ is contained in a maximal ideal, $U(R)\cup I_1\cup I^*_2\cup I^*_3\cup ... \cup I^*_n=R$ and all sets in the partition are disjoint. Next, we will show that every set in the partition $P$ is independent.
    
    Let $u_1, u_2 \in U(R)$, then we have $u_1 u_2 \in U(R)$, which means that $u_1 u_2$ is not a zero divisor. This implies that $u_1$ and $u_2$ are not adjacent in the graph $G_{UZ}(R)$. Let $a,b\in I_i$ where $i=\{1,2,...,n\}$. Because $I_i$ is an ideal, then $a+b \in I_i$. Since $I_i$ does not contain any units, it follows that $a+b \notin U(R)$. Therefore, $a$ and $b$ are not adjacent in the graph $G_{UZ}(R)$. Thus, for every $i\in\{1,2,...,n\}$, $I^*_i$ is an independent set, proving that the graph $G_{UZ}(R)$ is a $(n+1)$-partite graph.
\end{proof}

Although in general, if a ring $R$ has $n$ maximal ideals, then $G_{UZ}(R)$ forms a $(n+1)$-partite graph. In the specific case where $2 \notin U(R)$ and $R$ is not a local ring, implying that $n > 1$, Theorem \ref{bipartite condition} shows that $G_{UZ}(R)$ can still be regarded as a bipartite graph, particularly in the context of graph coloring. Consider the ring $\mathbb{Z}_6$, which has two maximal ideals: $\langle 2 \rangle = \{0, 2, 4\}$ and $\langle 3 \rangle = \{0, 3\}$. According to Theorem
\ref{the condition for Guz is n+1 partite}, $G_{UZ}(\mathbb{Z}_6)$ can be viewed as a 3-partite graph, as illustrated in Figure \ref{GambarGuz(Z6)-3partit}. However, $G_{UZ}(\mathbb{Z}_6)$ can also be interpreted as a bipartite graph, as shown in Figure \ref{GambarGuz(Z6)bipartit}.

\begin{figure}[htt]
\centering
    \begin{minipage}[b]{0.48\textwidth}
    \centering
            \begin{tikzpicture}
             \node[circle, draw, scale=0.8] (0) at (-1,0) {$\bar{0}$};
             \node[circle, draw, scale=0.8] (1) at (0.6,1.2) {$\bar{1}$};
             \node[circle, draw, scale=0.8] (2) at (0,0) {$\bar{2}$};
             \node[circle, draw, scale=0.8] (3) at (0,-1.2) {$\bar{3}$};
             \node[circle, draw, scale=0.8] (4) at (1,0) {$\bar{4}$};
             \node[circle, draw, scale=0.8] (5) at (-0.6,1.2) {$\bar{5}$};
            \Edge(0)(1) \Edge(0)(5)
            \Edge(1)(4) \Edge(2)(3)
            \Edge(2)(5) \Edge(3)(4)
            \end{tikzpicture}
            \captionof{figure}{$G_{UZ}(\mathbb{Z}_{6})$ as a 3-partite graph} \label{GambarGuz(Z6)-3partit}
    \end{minipage}\hfill
    \begin{minipage}[b]{0.48\textwidth}
    \centering
            \begin{tikzpicture}
             \node[circle, draw, scale=0.8] (0) at (-1,1.2) {$\bar{0}$};
             \node[circle, draw, scale=0.8] (1) at (-1,-1.2) {$\bar{1}$};
             \node[circle, draw, scale=0.8] (2) at (0,1.2) {$\bar{2}$};
             \node[circle, draw, scale=0.8] (3) at (0,-1.2) {$\bar{3}$};
             \node[circle, draw, scale=0.8] (4) at (1,1.2) {$\bar{4}$};
             \node[circle, draw, scale=0.8] (5) at (1,-1.2) {$\bar{5}$};
            \Edge(0)(1) \Edge(0)(5)
            \Edge(1)(4) \Edge(2)(3)
            \Edge(2)(5) \Edge(3)(4)
            \end{tikzpicture}
            \captionof{figure}{$G_{UZ}(\mathbb{Z}_{6})$ as a bipartite graph} \label{GambarGuz(Z6)bipartit}
    \end{minipage}
\end{figure}

An analysis of adjacency relations in $G_{UZ}(R)$ shows that the interaction patterns among vertices in the graph can be categorized based on the ideals of the ring $R$. This provides a more nuanced view of the structure of the graph, emphasizing the interaction between the algebraic properties of ideals and the combinatorial elements of the corresponding graph. The next structural property of the unit-zero divisor graph of a quotient ring arises directly from the fact that the maximal ideal induces a field structure on the quotient. This property is established in Lemma~\ref{lemma if m maks ideal maks then R/m is star}.

\begin{lemma}\label{lemma if m maks ideal maks then R/m is star}
    If $\mathfrak{m}$ is a maximal ideal of $R$, then $G_{UZ}(R/\mathfrak{m})$ is a star graph. 
\end{lemma}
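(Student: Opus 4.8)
The plan is to reduce the statement immediately to Corollary~\ref{star iff R field}. Since $\mathfrak{m}$ is a maximal ideal of $R$, the quotient $R/\mathfrak{m}$ is a field; being a quotient of a finite ring it is moreover a finite field, so the standing finiteness hypothesis under which Corollary~\ref{star iff R field} was proved is satisfied for the ring $R/\mathfrak{m}$. Applying that corollary to $R/\mathfrak{m}$ then yields at once that $G_{UZ}(R/\mathfrak{m})$ is a star graph.

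For completeness I would also record the short direct verification, which has the advantage of exhibiting the centre of the star explicitly. Write $F = R/\mathfrak{m}$, so that $Z(F) = \{0\}$ and $U(F) = F\setminus\{0\}$. If $x,y \in F$ are distinct and adjacent in $G_{UZ}(F)$, then $xy \in Z(F)$ forces $xy = 0$, hence $x = 0$ or $y = 0$. Conversely, if (say) $x = 0$, then $x+y = y \neq 0$ lies in $U(F)$ while $xy = 0 \in Z(F)$, so $0$ is adjacent to every nonzero element of $F$. Thus the edges of $G_{UZ}(F)$ are exactly the pairs $\{0,y\}$ with $y \neq 0$, i.e. $G_{UZ}(F) \cong K_{1,\,|F|-1}$, a star centred at $0$.

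There is essentially no obstacle here: the only point worth stating carefully is that $R/\mathfrak{m}$ inherits finiteness from $R$, so the earlier results apply verbatim. I would present the one-line argument via Corollary~\ref{star iff R field} as the proof, optionally appending the explicit description $G_{UZ}(R/\mathfrak{m}) \cong K_{1,\,|R/\mathfrak{m}|-1}$ as a parenthetical remark for later use (for instance when discussing quotients or products of rings).
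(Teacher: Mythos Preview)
Your proof is correct and follows essentially the same approach as the paper: observe that $R/\mathfrak{m}$ is a field and then invoke Corollary~\ref{star iff R field}. Your additional explicit description of the star (centred at $0$, isomorphic to $K_{1,|R/\mathfrak{m}|-1}$) and the remark on inherited finiteness are nice touches but go beyond what the paper records.
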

\begin{proof}
    Since $\mathfrak{m}$ is a maximal ideal, then $R/\mathfrak{m}$ is a field. By Theorem \ref{star iff R field}, we have $G_{UZ}(R/\mathfrak{m})$ is a star graph.
\end{proof}

The next result further explores the structure of the unit-zero divisor graph of a finite commutative ring by examining the internal adjacency relations within the cosets of a proper ideal. In particular, we show that each coset forms an independent set in the graph $G_{UZ}(R)$ when $2 + I$ is not a unit in the quotient ring $R/I$.

\begin{theorem}\label{theorem every coset is independent if 2 is not a unit}
Let $I$ be a proper ideal of $R$. If $2 + I$ is not a unit in the quotient ring $R/I$, then the coset $r + I$ forms an independent set in the unit-zero divisor graph $G_{UZ}(R)$ for any $r \in R$.
\end{theorem}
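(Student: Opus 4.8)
The plan is to prove independence of the coset $r+I$ by showing that the \emph{first} adjacency condition already fails for any two of its vertices: for distinct $x,y\in r+I$ we will have $x+y\notin U(R)$, and since adjacency in $G_{UZ}(R)$ requires $x+y\in U(R)$, this forces $x$ and $y$ to be non-adjacent. As $x,y$ range over all distinct pairs in $r+I$, the coset is then an independent set, and the multiplicative condition $xy\in Z(R)$ never even needs to be examined.

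First I would fix distinct $x,y\in r+I$ and write $x=r+a$, $y=r+b$ with $a,b\in I$, so that
$x+y=2r+(a+b)$ with $a+b\in I$. Let $\pi\colon R\to R/I$ be the canonical surjection; then $\pi(x+y)=\pi(2r)=(2+I)(r+I)$ in $R/I$. The key step is to observe that $(2+I)(r+I)$ is a non-unit of $R/I$: this uses only the elementary fact that in a commutative ring, if a product $ab$ is a unit then $a$ is a unit (from $(ab)c=1$ one gets $a(bc)=1$). Since $2+I\notin U(R/I)$ by hypothesis, the product $(2+I)(r+I)$ cannot be a unit, i.e. $\pi(x+y)\notin U(R/I)$.

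Finally I would transfer this back to $R$. Because $\pi$ is a surjective ring homomorphism it sends units to units (if $uv=1$ then $\pi(u)\pi(v)=1$), so if $x+y$ were a unit of $R$ its image $\pi(x+y)$ would be a unit of $R/I$, contradicting the previous step; hence $x+y\notin U(R)$. Therefore $x$ is not adjacent to $y$ in $G_{UZ}(R)$, and since this holds for every pair of distinct elements of $r+I$, the coset $r+I$ is an independent set. I do not anticipate a genuine obstacle here; the only point requiring care is that the set of non-units is not in general an ideal, so one must argue via the ``a factor of a unit is a unit'' fact (or, equivalently, pass to a maximal ideal $\mathfrak{M}\supseteq I$ with $2\in\mathfrak{M}$ and note directly that $x+y=2r+(a+b)\in\mathfrak{M}$) rather than trying to add $2r$ into $I$ termwise.
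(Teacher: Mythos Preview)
Your argument is correct and follows essentially the same route as the paper: pass to $R/I$, observe that $\pi(x+y)=(2+I)(r+I)$, use that a factor of a unit is a unit to see this image is a non-unit, and pull back via the fact that $\pi$ preserves units. The only difference is that the paper invokes the finiteness of $R/I$ (units vs.\ zero divisors) to justify that $2+I$ would have to be a unit, whereas your direct ``$(ab)c=1\Rightarrow a(bc)=1$'' argument is cleaner and avoids that unnecessary hypothesis.
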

\begin{proof}
    Suppose that $I$ is a proper ideal in $R$ and $2+I$ is not a unit in $R/I$. Let $x,y\in r+I$ be two distinct elements in the coset $r+I\in R/I$. This means $x=r+i_x$ and $y=r+i_y$ for some $i_x,i_y \in I$. Thus, there exists $a \in I$ such that $x = y + a$. We have  $x + y = (y + a) + y = 2y + a.$
    Since $a \in I$, then $x + y + I = 2y + I$. Observe that $x + y = r+i_x+r+i_y = 2r + i_x+i_y$, hence $x + y + I = 2r+I=(2 + I)(r + I)$ in $R/I$.
    
    Suppose that $x$ and $y$ are adjacent in $G_{UZ}(R)$, which means that $x + y$ is a unit in $R$ and $xy$ is a zero divisor in $R$. The canonical homomorphism of $R \to R/I$ maps units to units. Since $x + y$ is a unit in $R$, then $x + y + I$ is a unit in $R/I$. This implies that $(2 + I)(r+ I)$ is also a unit in $R/I$.
    Since $R/I$ is a finite commutative ring, every element in $R/I$ is either a unit or a zero divisor. If $(2 + I)(r+ I)$ is a unit in $R/I$, then $2 + I$ must also be a unit in $R/I$, a contradiction. Therefore, every coset $r+I$ is an independent set in $G_{UZ}(R)$.
\end{proof}

An interesting behavior emerges when the observation of ideals is restricted specifically to the Jacobson radical $J(R)$ of the ring $R$. Since $J(R)$ is the intersection of all maximal ideals of $R$, it plays a central role in determining the annihilation properties that influence adjacency in $G_{UZ}(R)$. By examining the image of each element in the quotient ring $R/J(R)$, we can better understand how the structure of $G_{UZ}(R)$ simplifies when $R$ is factored by $J(R)$.  Consider the ring $\mathbb{Z}_{12}$. Figure \ref{GambarGuz(Z12)} and Figure \ref{GambarGuz(Z12/J)} illustrate how adjacency is preserved between $G_{UZ}(\mathbb{Z}_{12})$ and $G_{UZ}(\mathbb{Z}_{12}/J(\mathbb{Z}_{12}))$.

\begin{figure}[htt]
\centering
\begin{minipage}[b]{0.48\textwidth}
    \centering
    \begin{tikzpicture}
     \node[circle, draw, scale=0.8] (0) at (-0.5,0.8) {$\bar{0}$};
     \node[circle, draw, scale=0.8] (1) at (0.5,0.8) {$\bar{1}$};
     \node[circle, draw, scale=0.8] (2) at (-0.5,-0.8) {$\bar{2}$};
     \node[circle, draw, scale=0.8] (3) at (0.5,-0.8) {$\bar{3}$};
     \node[circle, draw, scale=0.8] (4) at (1.2,0) {$\bar{4}$};
     \node[circle, draw, scale=0.8] (5) at (-1.2,0) {$\bar{5}$};
     \node[circle, draw, scale=0.8] (6) at (-1,1.5) {$\bar{6}$};
     \node[circle, draw, scale=0.8] (7) at (1,1.5) {$\bar{7}$};
     \node[circle, draw, scale=0.8] (8) at (-1,-1.5) {$\bar{8}$};
     \node[circle, draw, scale=0.8] (9) at (1,-1.5) {$\bar{9}$};
     \node[circle, draw, scale=0.8] (10) at (2.1,0) {$\bar{10}$};
     \node[circle, draw, scale=0.8] (11) at (-2.1,0) {$\bar{11}$};
    \Edge(0)(1) \Edge(0)(5) \Edge(0)(7) \Edge(0)(11)
    \Edge(1)(4) \Edge(1)(6) \Edge(1)(10)
    \Edge(2)(3) \Edge(2)(5) \Edge(2)(9) \Edge(2)(11)
    \Edge(3)(4) \Edge(3)(8) \Edge(3)(10)
    \Edge(4)(7) \Edge(4)(9) \Edge(5)(6) \Edge(5)(8)
    \Edge(6)(7) \Edge(6)(11) \Edge(7)(10)
    \Edge(8)(9) \Edge(8)(11) \Edge(9)(10)
    \end{tikzpicture}
    \caption{$G_{UZ}(\mathbb{Z}_{12})$} \label{GambarGuz(Z12)}
\end{minipage}
\hfill
\begin{minipage}[b]{0.48\textwidth}
   \centering
        \begin{tikzpicture}
         \node[circle, draw, scale=0.8] (0) at (-0.7,1.5) {$\bar{\bar{0}}$};
         \node[circle, draw, scale=0.8] (1) at (0.7,1.5) {$\bar{\bar{1}}$};
         \node[circle, draw, scale=0.8] (2) at (-0.7,-1.5) {$\bar{\bar{2}}$};
         \node[circle, draw, scale=0.8] (3) at (0.7,-1.5) {$\bar{\bar{3}}$};
         \node[circle, draw, scale=0.8] (4) at (2,0) {$\bar{\bar{4}}$};
         \node[circle, draw, scale=0.8] (5) at (-2,0) {$\bar{\bar{5}}$};
        \Edge(0)(1) \Edge(0)(5)
        \Edge(1)(4) \Edge(2)(3)
        \Edge(2)(5) \Edge(3)(4)
        \end{tikzpicture}
        \caption{$G_{UZ}(\mathbb{Z}_{12}/J(\mathbb{Z}_{12}))$} \label{GambarGuz(Z12/J)}
\end{minipage}
\end{figure}

The following theorem establishes that if two cosets of $J(R)$ are adjacent in $G_{UZ}(R/J(R))$, then every element within these cosets remains adjacent in $G_{UZ}(R)$. This highlights the influence of the Jacobson radical in maintaining connectivity within the graph.

\begin{theorem}\label{adjacency in the factor ring of the Jacobson radical}
    Let $J(R)$ be the Jacobson radical of $R$. For any $x,y \in R$, if $x+J(R)$ is adjacent to $y+J(R)$ in $G_{UZ}(R/J(R))$, then every element of $x+J(R)$ is adjacent to every element of $y+J(R)$ in $G_{UZ}(R)$.
\end{theorem}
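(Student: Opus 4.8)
The plan is to exploit the standard fact that units are detected modulo the Jacobson radical: for $r \in R$ one has $r \in U(R)$ if and only if $r + J(R) \in U(R/J(R))$. The forward direction is immediate, since the canonical surjection $R \to R/J(R)$ sends units to units. For the converse, if $r \notin U(R)$ then $r$ lies in some maximal ideal $M$ of $R$, and since $J(R) \subseteq M$ the coset $r + J(R)$ lies in the proper ideal $M/J(R)$ of $R/J(R)$, whence $r + J(R) \notin U(R/J(R))$. I would state this as a preliminary observation (it is standard commutative algebra, and close to facts already recalled in Section~2) before turning to the main argument.

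Next, fix arbitrary $x' \in x + J(R)$ and $y' \in y + J(R)$, say $x' = x + a$ and $y' = y + b$ with $a, b \in J(R)$. Since $J(R)$ is an ideal, $a + b \in J(R)$ and $xb + ay + ab \in J(R)$, so
\[
x' + y' + J(R) = (x + y) + J(R), \qquad x'y' + J(R) = xy + J(R).
\]
By hypothesis $(x + J(R)) + (y + J(R)) = (x+y) + J(R)$ lies in $U(R/J(R))$, so the observation above gives $x' + y' \in U(R)$. Also by hypothesis $(x + J(R))(y + J(R)) = xy + J(R)$ is a zero divisor of $R/J(R)$; as $R/J(R)$ is a nonzero ring, a zero divisor is never a unit, so $x'y' + J(R) \notin U(R/J(R))$, and the observation forces $x'y' \notin U(R)$. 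Because $R$ is finite, every non-unit of $R$ is a zero divisor, so $x'y' \in Z(R)$. Finally, the adjacency of $x + J(R)$ and $y + J(R)$ in $G_{UZ}(R/J(R))$ forces these cosets to be distinct, i.e. $x - y \notin J(R)$, hence $x' - y' = (x - y) + (a - b) \notin J(R)$ and in particular $x' \neq y'$. Thus $x'$ and $y'$ are distinct vertices of $G_{UZ}(R)$ with $x' + y' \in U(R)$ and $x'y' \in Z(R)$, i.e. they are adjacent; since $x', y'$ were arbitrary, the claim follows.

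The only subtle point is the step from ``$xy + J(R)$ is a zero divisor in $R/J(R)$'' to ``$x'y'$ is a zero divisor in $R$'': in contrast to unithood, zero-divisorhood does not in general lift through a quotient, so this cannot be argued by a direct pullback. The way around it is the one used above --- rule out the only other possibility by showing $x'y'$ is not a unit, and then invoke finiteness of $R$ (so that $R \setminus U(R) = Z(R)$). Everything else is a routine coset computation resting on the fact that $J(R)$ is an ideal.
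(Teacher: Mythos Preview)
Your proof is correct and follows essentially the same approach as the paper: both arguments rest on the fact that an element of $R$ is a unit if and only if its image in $R/J(R)$ is a unit, and both conclude $x'y' \in Z(R)$ by ruling out unithood and invoking finiteness. The paper unpacks the unit-lifting argument inline via a maximal ideal containing a non-unit, whereas you isolate it as a clean preliminary observation and then apply it directly via coset arithmetic; your packaging is tidier, and you also explicitly verify $x' \neq y'$, a distinctness check the paper's proof omits.
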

\begin{proof}
    Suppose that $x+J(R)$ is adjacent to $y+J(R)$ in $G_{UZ}(R/J(R))$, which means that $(x+J(R))+(y+J(R))=x+y+J(R)\in U(R/J(R))$ and $(x+J(R))(y+J(R))=xy+J(R)\in Z(R/J(R))$. 
    
    Let $a,b\in R$ where $a\in x+J(R)$ and $b\in y+J(R)$. We can write $a=x+j_a$ and $b=y+j_b$ where $j_a,j_b \in J(R)$. Since $x+y+J(R)\in U(R/J(R))$, there exists $u\in U(R)$ such that $x+y+J(R)=u+J(R)$. We obtain $x+y-u=(a-j_a)+(b-j_b)-u=(a+b)-(j_a+j_b)-u \in J(R)$, and consequently $a+b$ is a unit in $R$. If we assume $a+b$ is not a unit, then $\langle a+b \rangle$ is a proper ideal in $R$. Hence there exists a maximal ideal in $R$, let's call it $M_1$, such that $\langle a+b \rangle \subseteq M_1$ and $a+b \in M_1$. On the other hand, $j_a,j_b \in M_1$ and $(a+b)-(j_a+j_b)-u \in M_1$ are known. This implies $u \in M_1$, leading to a contradiction because $u$ is a unit in $R$. Therefore, $a+b$ is a unit in $R$.
    
    Since $(x+J(R))(y+J(R))=xy+J(R)\in Z(R/J(R))$, there exists $z\in Z(R)$ such that $xy+J(R)=z+J(R)$. We obtain $ xy-z=(a-j_a)(b-j_b)-z=(ab)+(j_aj_b)-(j_ab+j_ba)-z \in J(R) $. Since $z \in Z(R)$, $z$ is not a unit.  Hence, there exists a maximal ideal in $R$, let it be $M_2$, such that $z \in M_2$. On the other hand, we have $j_aj_b \in M_2$ and $j_a b+j_b a \in M_2$. This implies that $ab$ is not a unit, thus $ab \in Z(R)$. Therefore, $a$ and $b$ are adjacent in $G_{UZ}(R)$.
\end{proof}

Conversely, the following theorem shows that adjacency in $G_{UZ}(R)$ implies adjacency in the graph $G_{UZ}(R/J(R))$. It means that the unit-zero divisor graph of the quotient ring captures the essential adjacency relationships present in the original graph. This result is beneficial for reducing complexity when analyzing the graph structure of larger rings by considering their quotient counterparts.

\begin{theorem}\label{adjacent in R implies adjacent in the factor ring of the Jacobson radical}
    Let $J(R)$ be the Jacobson radical of $R$. For any $x,y \in R$, if $x$ is adjacent to $y$ in $G_{UZ}(R)$, then $x+J(R)$ is adjacent to $y+J(R)$ in $G_{UZ}(R/J(R))$.
\end{theorem}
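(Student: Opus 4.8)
The plan is to push the adjacency data through the canonical surjection $\pi\colon R\to R/J(R)$, checking the two adjacency conditions separately and then verifying that the images remain \emph{distinct} vertices. Throughout I write $\bar r = r+J(R)$, and I use that $R/J(R)$ is again a finite commutative ring with identity, so the dichotomy ``unit or zero divisor'' recorded in the preliminaries applies there as well.

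The condition $\bar x+\bar y\in U(R/J(R))$ is immediate: every ring homomorphism sends units to units, and $x+y\in U(R)$ by hypothesis, so $\overline{x+y}=\bar x+\bar y$ is a unit in $R/J(R)$.

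For the condition $\bar x\bar y\in Z(R/J(R))$, the one non-routine point is that a non-unit of $R$ has non-unit image in $R/J(R)$. Concretely, $xy\in Z(R)$ forces $xy\notin U(R)$, since in a finite commutative ring no element is simultaneously a unit and a zero divisor; hence $xy$ lies in some maximal ideal $M$ of $R$, and because $J(R)\subseteq M$ the image $\overline{xy}$ lies in the proper ideal $M/J(R)$ of $R/J(R)$ and is therefore not a unit. As $R/J(R)$ is finite, $\overline{xy}$ must then be a zero divisor, i.e.\ $\bar x\bar y\in Z(R/J(R))$. (Equivalently one argues directly: if $\overline{xy}$ were a unit, then $xy\,s=1+j$ for some $s\in R$, $j\in J(R)$, and since $1+j\in U(R)$ this would make $xy$ a unit of $R$, a contradiction.)

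It remains to check that $\bar x\neq\bar y$, so that they actually form an edge in $G_{UZ}(R/J(R))$ rather than a forbidden loop. Suppose instead $\bar x=\bar y$. Then $\bar x+\bar y=2\bar x$ is a unit by the first step, hence $\bar x$ itself is a unit in $R/J(R)$, and therefore $\bar x\bar y=\bar x^{2}$ is a unit; this contradicts the second step, which showed $\bar x\bar y\in Z(R/J(R))$ is a non-unit. Thus $\bar x$ and $\bar y$ are distinct, and with both adjacency conditions verified, $x+J(R)$ is adjacent to $y+J(R)$ in $G_{UZ}(R/J(R))$. The main obstacle is the middle step — controlling the image of the zero divisor $xy$ — which is precisely where the defining property of the Jacobson radical ($1+J(R)\subseteq U(R)$, equivalently $J(R)$ being the intersection of the maximal ideals) is used; everything else is bookkeeping.
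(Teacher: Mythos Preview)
Your proof is correct and follows the same route as the paper's---pushing the two adjacency conditions through the canonical surjection $R\to R/J(R)$. In fact your argument is the more careful of the two: the paper simply asserts that zero divisors map to zero divisors and omits the distinctness check $\bar x\neq\bar y$ entirely, both of which you justify.
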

\begin{proof}
    Suppose that $x$ is adjacent to $y$ in $G_{UZ}(R)$, which means $x+y\in U(R)$ and $xy\in Z(R)$. By canonical homomorphism $ \phi: R \to R/J(R) $, every unit in $R$ is mapped to a unit in $ R/J(R) $, and every zero divisor in $R$ is mapped to zero divisor in $R/J(R)$. This implies $ x+y+J(R) \in U(R/J(R)) $ and $xy+J(R)\in Z(R/J(R))$. Therefore, $ x+J(R) $ is adjacent to $ y+J(R) $ in $ G_{UZ}(R/J(R)) $.
\end{proof}

The previous two theorems establish a fundamental link between the adjacency relations in $G_{UZ}(R)$ and those in its quotient graph $G_{UZ}(R/J(R))$. Specifically, they show that the adjacency in the quotient graph corresponds to adjacency between entire cosets in $G_{UZ}(R)$, and vice versa. This observation suggests that key structural properties of $G_{UZ}(R)$ may be preserved when passing to the quotient ring.

One natural question is whether this preservation extends to graph-theoretic parameters such as the diameter. The following theorem confirms that the diameter of $G_{UZ}(R)$ remains unchanged under the Jacobson radical quotient.

\begin{theorem}
    Let $R\ncong\mathbb{Z}_2$ and $J(R)$ be the Jacobson radical of $R$. Then, the diameter of $G_{UZ}(R)$ is equal to the diameter of $G_{UZ}(R/J(R))$ if and only if both $G_{UZ}(R)$ and $G_{UZ}(R/J(R))$ are connected and $R/J(R)\ncong\mathbb{Z}_2$.
\end{theorem}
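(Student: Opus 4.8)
The plan is to realise $G_{UZ}(R)$ as a \emph{blow-up} of $G_{UZ}(R/J(R))$ and then read off the diameter from that picture. Write $\pi\colon R\to R/J(R)$ for the canonical surjection. The first step is to prove that for distinct $x,y\in R$, $x$ is adjacent to $y$ in $G_{UZ}(R)$ if and only if $\pi(x)\neq\pi(y)$ and $\pi(x)$ is adjacent to $\pi(y)$ in $G_{UZ}(R/J(R))$. The ``if'' direction is exactly Theorem~\ref{adjacency in the factor ring of the Jacobson radical}. For ``only if'', Theorem~\ref{adjacent in R implies adjacent in the factor ring of the Jacobson radical} gives adjacency of $\pi(x)$ and $\pi(y)$, and $\pi(x)\neq\pi(y)$ is forced by the no-loop computation made just after the definition, applied inside $R/J(R)$: if $\pi(x)=\pi(y)=w$ then $2w\in U(R/J(R))$ and $w^2\in Z(R/J(R))$, whence $4w^2\in U(R/J(R))\cap Z(R/J(R))$, which is impossible. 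Equivalently: every coset of $J(R)$ is an independent set of size $|J(R)|$ in $G_{UZ}(R)$, and two distinct cosets are joined either by all $|J(R)|^2$ possible edges or by none, according as their images are adjacent in $G_{UZ}(R/J(R))$.

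From this description I would extract two facts. First, $G_{UZ}(R)$ is connected if and only if $G_{UZ}(R/J(R))$ is: projecting a walk in $G_{UZ}(R)$ gives a walk in $G_{UZ}(R/J(R))$ (adjacent vertices have distinct images), and conversely a walk between $\pi(x)$ and $\pi(y)$ lifts to one between $x$ and $y$ by choosing representatives, while any two elements of a common coset are joined through a representative of a neighbouring coset --- one exists because $1\notin J(R)$ gives $|R/J(R)|\geq 2$, so a connected $G_{UZ}(R/J(R))$ has no isolated vertex. Second, assuming both graphs connected, $d_{G_{UZ}(R)}(x,y)=d_{G_{UZ}(R/J(R))}(\pi(x),\pi(y))$ whenever $\pi(x)\neq\pi(y)$ (project for ``$\geq$''; lift a geodesic with the prescribed endpoints for ``$\leq$''), while $d_{G_{UZ}(R)}(x,y)=2$ whenever $x\neq y$ but $\pi(x)=\pi(y)$. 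Consequently $\operatorname{diam}(G_{UZ}(R))=\operatorname{diam}(G_{UZ}(R/J(R)))$ if $J(R)=0$, and $\operatorname{diam}(G_{UZ}(R))=\max\{\operatorname{diam}(G_{UZ}(R/J(R))),\,2\}$ if $J(R)\neq 0$.

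The equivalence then follows. Reading the stated equality of diameters as the assertion that both graphs are connected with coinciding diameters, the forward direction needs only to rule out $R/J(R)\cong\mathbb{Z}_2$: in that case $G_{UZ}(R/J(R))=K_2$ has diameter $1$, but $R\ncong\mathbb{Z}_2$ forces $J(R)\neq 0$, so the formula gives $\operatorname{diam}(G_{UZ}(R))=\max\{1,2\}=2$, a contradiction. For the converse, suppose both graphs connected and $R/J(R)\ncong\mathbb{Z}_2$; it suffices to show $\operatorname{diam}(G_{UZ}(R/J(R)))\geq 2$, since then the formula yields $\operatorname{diam}(G_{UZ}(R))=\operatorname{diam}(G_{UZ}(R/J(R)))$ whether or not $J(R)=0$. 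For this I would note that $G_{UZ}(S)$ is complete only when $S\cong\mathbb{Z}_2$: completeness forces $0$ to be adjacent to every nonzero element, hence every nonzero element is a unit and $S$ is a field, and then if $|S|\geq 3$ two distinct nonzero elements have a product that is a unit, hence not a zero divisor, so they are not adjacent. Thus $G_{UZ}(R/J(R))$ is connected, has at least $3$ vertices, and is not complete, so its diameter is at least $2$.

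The main obstacle is the careful bookkeeping in the first step rather than any deep idea: one must check that ``adjacent cosets are completely joined'' is literally Theorem~\ref{adjacency in the factor ring of the Jacobson radical}, that adjacency in $G_{UZ}(R)$ genuinely forces distinct cosets (the one place the argument quietly reuses the no-loop fact), and that a geodesic in $G_{UZ}(R/J(R))$ can be lifted with the two prescribed endpoints. Beyond that, the degenerate cases --- $J(R)=0$ versus $|J(R)|\geq 2$, and fixing the convention under which a disconnected graph has no (finite) diameter --- must be spelled out so that the ``only if'' direction cleanly delivers both halves of the right-hand side.
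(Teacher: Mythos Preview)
Your argument is correct and uses the same adjacency-lifting theorems as the paper, but the packaging is tighter: rather than proving the two diameter inequalities separately, you recognise $G_{UZ}(R)$ as a blow-up of $G_{UZ}(R/J(R))$ and extract the single formula $\operatorname{diam}(G_{UZ}(R))=\max\{\operatorname{diam}(G_{UZ}(R/J(R))),2\}$ when $J(R)\neq 0$, from which both directions drop out at once. Two places where you actually go beyond the paper: your lemma that $G_{UZ}(S)$ is complete only for $S\cong\mathbb{Z}_2$ supplies the justification for the step ``$R/J(R)\ncong\mathbb{Z}_2\Rightarrow\operatorname{diam}(G_{UZ}(R/J(R)))\geq 2$'', which the paper asserts without argument; and your explicit proof that connectedness of $G_{UZ}(R)$ and of $G_{UZ}(R/J(R))$ are equivalent handles the disconnected branch of the forward direction, which the paper's contraposition announces but then treats only in the $R/J(R)\cong\mathbb{Z}_2$ case.
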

\begin{proof}
    ($\Rightarrow$)We will prove this part by contraposition, namely if $G_{UZ}(R)$ or $G_{UZ}(R/J(R))$ is not connected or $R/J(R)\cong \mathbb{Z}_2$, then the diameter of $G_{UZ}(R)$ is not equal to the diameter of $G_{UZ}(R/J(R))$. If $R/J(R) \cong \mathbb{Z}_2$, then $2+J(R)=0+J(R)$. Thus, $2+J(R)$ is not a unit in $R/J(R)$. By Theorem \ref{theorem every coset is independent if 2 is not a unit}, the coset $r+I$ is an independent set in $G_{UZ}(R)$ for every $r\in R$. Hence $0+J(R)$ and $1+J(R)$ are independent sets in $G_{UZ}(R)$. As $ R \not\cong \mathbb{Z}_2 $, we have $ |0+J(R)| \geq 2 $. Since $0+J(R)$ is an independent set, we have $d(a,b)>1$ for every $a,b\in 0+J(R)$, thus $\text{diam}(G_{UZ}(R))>1$. However, $\text{diam}(G_{UZ}(R/J(R)))=1$ because $R/J(R)\cong \mathbb{Z}_2$. Therefore, the diameter of $G_{UZ}(R)$ is not equal to the diameter of $G_{UZ}(R/J(R))$.
    
    ($\Leftarrow$) Let $G_{UZ}(R)$ and $G_{UZ}(R/J(R))$ be connected and $R/J(R) \ncong \mathbb{Z}_2$. First, we show that $\text{diam}\big(G_{UZ}(R)\big) \leq \text{diam}\big(G_{UZ}(R/J(R))\big)$.
    Let the diameter of $G_{UZ}(R/J(R))$ be $m$, which means that any two distinct cosets $x + J(R)$ and $y + J(R)$ represented by two vertices in $G_{UZ}(R/J(R))$ can be connected by the shortest path with a maximum length of $m$. 
    Let $a, b$ be the two distinct vertices in $G_{UZ}(R)$. We divide it into two cases:
    \begin{enumerate}[label=(\roman*)]
    \item If $a+J(R)\neq b+J(R)$, since the diameter of $G_{UZ}(R/J(R))$ is $m$, we have $d(a+J(R),b+J(R))\leq m$. Let the shortest path between the two cosets $a + J(R)$ and $b + J(R)$ in $G_{UZ}(R/J(R))$ be
    \[
    a + J(R) = a_1 + J(R) \rightarrow a_2 + J(R) \rightarrow \dots \rightarrow a_{n+1} + J(R) = b + J(R)
    \] 
    of length $n\leq m$.
    By Theorem \ref{adjacency in the factor ring of the Jacobson radical}, if the coset $x+J(R)$ is adjacent to the coset $y+J(R)$ in $G_{UZ}(R/J(R))$, then every element in the coset $x + J(R)$ is also adjacent to every element in the coset $y + J(R)$ in $G_{UZ}(R)$.
    We take $a_i \in a_i + J(R)$ for $2\leq i \leq n$, such that $a_i$ is adjacent to $a_{i+1}$ in $G_{UZ}(R)$. Thus, we have a path
    \[
    a \rightarrow a_2 \rightarrow a_3 \rightarrow \dots \rightarrow a_n \rightarrow b
    \]
    in $G_{UZ}(R)$. Since this path has a length of $n\leq m$, the diameter of $G_{UZ}(R) \leq n \leq m$. Therefore $\text{diam}\big(G_{UZ}(R)\big) \leq \text{diam}\big(G_{UZ}(R/J(R))\big)$.
    
    \item If $a+J(R)=b+J(R)$, since $G_{UZ}(R/J(R))$ is connected, then $a+J(R)$ is not an isolated vertex. Hence, there exists $r+J(R)$ that is adjacent to $a+J(R)$. By Theorem \ref{adjacency in the factor ring of the Jacobson radical}, if the coset $x+J(R)$ is adjacent to the coset $y+J(R)$ in $G_{UZ}(R/J(R))$, then every element in the coset $x + J(R)$ is also adjacent to every element in the coset $y + J(R)$ in $G_{UZ}(R)$. As a result, in this case, there exists $r\in r+J(R)$ such that a path $a\rightarrow r\rightarrow b$ is formed, which means $d(a,b)=2$. Since $R/J(R)\ncong \mathbb{Z}_2$, it follows that $\text{diam}(G_{UZ}(R/J(R)))\geq 2$. Therefore $\text{diam}\big(G_{UZ}(R)\big) \leq \text{diam}\big(G_{UZ}(R/J(R))\big)$.
    \end{enumerate}

    Next, we show that $\text{diam}\big(G_{UZ}(R/J(R))\big) \leq \text{diam}\big(G_{UZ}(R)\big)$. 
    Let the diameter of $G_{UZ}(R)$ be $k$, which means that there is a path with a maximum length of $k$ for every two vertices $x, y \in R$.
    Let $x+J(R)$ and $y+J(R)$ be two vertices in $G_{UZ}(R/J(R))$. Since the diameter of $G_{UZ}(R)$ is $k$, then $d(x,y)\leq k$ in the graph $G_{UZ}(R)$. Let the shortest path between vertices $x$ and $y$ be
    \[
    x=a_1 \rightarrow a_2 \rightarrow a_3 \rightarrow \dots \rightarrow a_{m+1} = y
    \]
    of length $k$.
    By Theorem \ref{adjacent in R implies adjacent in the factor ring of the Jacobson radical}, if $x$ is adjacent to $y$ in $G_{UZ}(R)$, then $x+J(R)$ is adjacent to $y+J(R)$ in $G_{UZ}(R/J(R))$.
    Thus, we obtain a walk from $x+J(R)$ to $y+J(R)$ in $G_{UZ}(R)$ as follows
     \[
    x + J(R) = a_1 + J(R) \rightarrow a_2 + J(R)  \rightarrow \dots \rightarrow a_{m+1} + J(R) = y + J(R).
     \]
    Since the path connecting the two cosets $x+J(R)$ and $y+J(R)$ has a length of $m\leq k$, $\text{diam}(G_{UZ}(R/J(R))) \leq m \leq \text{diam}(G_{UZ}(R))$.
    
    By the two sections above, we conclude that the diameter of $G_{UZ}(R)$ is equal to the diameter of $G_{UZ}(R/J(R))$.
\end{proof}

\section{The unit-zero divisor graph of the ring of integers modulo}

In this section, we focus on the structural properties of the unit-zero divisor graph associated with the ring of integers modulo $n$, denoted by $\mathbb{Z}_n$. Since $\mathbb{Z}_n$ is a well-understood and fundamental example of a finite commutative ring, it serves as a natural and instructive setting for analyzing the behavior of $G_{UZ}(\mathbb{Z}_n)$ under various arithmetic conditions on $n$.
We begin with the case where $G_{UZ}(\mathbb{Z}_n)$ exhibits a star-like structure. This case occurs precisely when $\mathbb{Z}_n$ is a field, that is when $n$ is prime. The following remark summarizes this observation.

\begin{remark}\label{remark Zn star iff n prime}
    The unit-zero graph $G_{UZ}(\mathbb{Z}_n)$ is a star graph if and only if $n$ is prime.
\end{remark}

\noindent
This follows directly from the fact that $\mathbb{Z}_n$ is a field if and only if $n$ is prime, and by Corollary~\ref{star iff R field}, which characterizes when $G_{UZ}(R)$ takes the form of a star graph for a general ring $R$.

Having established when the graph is a star, we next investigate the bipartite and complete bipartite properties of $G_{UZ}(\mathbb{Z}_n)$ under different conditions on $n$. In particular, we examine the role of parity and prime power factorizations of $n$ in determining whether the graph is bipartite or even a complete bipartite graph.

\begin{theorem}\label{theorem Zn bipartite iff n even}
    If $n$ is even, then $G_{UZ}(\mathbb{Z}_n)$ is a bipartite graph.
\end{theorem}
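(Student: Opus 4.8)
The plan is to reduce this immediately to Theorem~\ref{bipartite condition}. The only thing to observe is that when $n$ is even, $\gcd(2,n)\geq 2$, so $\bar{2}$ is not a unit in $\mathbb{Z}_n$; that is, $2\notin U(\mathbb{Z}_n)$. Theorem~\ref{bipartite condition} then applies directly and yields that $G_{UZ}(\mathbb{Z}_n)$ is bipartite. So the proof is really just the elementary arithmetic remark about $\bar{2}$ followed by a citation.

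For completeness I would also record the explicit bipartition furnished by the proof of Theorem~\ref{bipartite condition}: take $M=\langle \bar{2}\rangle$, the ideal of even residues, which is a maximal ideal of $\mathbb{Z}_n$ with $\mathbb{Z}_n/M\cong\mathbb{Z}_2$, and partition the vertex set as $V\big(G_{UZ}(\mathbb{Z}_n)\big)=M\sqcup(\mathbb{Z}_n\setminus M)$, i.e.\ even residues versus odd residues. One then checks that neither part carries an edge: the sum of two even residues is even, hence a non-unit, so no two vertices of $M$ are adjacent; likewise the sum of two odd residues is even, hence a non-unit, so no two vertices of $\mathbb{Z}_n\setminus M$ are adjacent. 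Every edge therefore joins the two parts, and the graph is bipartite.

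There is essentially no obstacle here; the whole content is the fact that $\bar{2}$ is a non-unit modulo an even number, after which the cited theorem (or the two-line parity check) closes the argument. It is worth emphasizing only that the conclusion is bipartiteness, not complete bipartiteness --- as the earlier picture of $G_{UZ}(\mathbb{Z}_6)$ illustrates --- and that the reverse implication fails in general (for instance $G_{UZ}(\mathbb{Z}_9)$ is the complete bipartite graph $K_{3,6}$ even though $9$ is odd), so no converse is being asserted in this statement.
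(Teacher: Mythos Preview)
Your proposal is correct and follows essentially the same approach as the paper: observe that $\bar{2}$ is a non-unit in $\mathbb{Z}_n$ when $n$ is even, then invoke Theorem~\ref{bipartite condition}. The only cosmetic difference is that the paper phrases the first step as ``$2$ is a zero divisor, hence not a unit,'' whereas you use the $\gcd$ observation; your added description of the explicit bipartition and the remarks on the failure of the converse are accurate but go beyond what the paper records here.
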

\begin{proof}
If $n$ is even, then $2\in \mathbb{Z}_n$ is a zero divisor. Thus, $2$ is not a unit. By Theorem \ref{bipartite condition}, $G_{UZ}(\mathbb{Z}_n)$ is a bipartite graph if $n$ is even.
\end{proof}

\begin{theorem}\label{theorem Zn complete bipartite if n=p^k}
    If $ n=p^k $ for a prime number $p$ and a positive integer $k$, then $ G_{UZ}(\mathbb{Z}_n) $ is a complete bipartite graph $ K_{a,b} $ with $ a=p^{k-1} $ and $ b=p^{k-1}(p-1) $.
\end{theorem}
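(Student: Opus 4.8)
The plan is to reduce the statement to Theorem~\ref{theorem local implies complete bipartite} by recognizing $\mathbb{Z}_{p^k}$ as a finite local ring. First I would verify that $\mathbb{Z}_{p^k}$ is local with unique maximal ideal $\mathfrak{m} = \langle \bar p \rangle = \bar p\,\mathbb{Z}_{p^k}$. The quickest route is the criterion recalled in Section~2: an element $\bar a \in \mathbb{Z}_{p^k}$ is a unit precisely when $\gcd(a,p) = 1$, i.e.\ when $p \nmid a$; hence the set of non-units is exactly $\{\bar a : p \mid a\} = \langle \bar p \rangle$, which is an ideal. Since the non-units form an ideal, $\mathbb{Z}_{p^k}$ is local, and $\mathfrak{m} = \langle \bar p \rangle = Z(\mathbb{Z}_{p^k})$.

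Next I would count the two parts of the bipartition. The maximal ideal $\mathfrak{m} = \langle \bar p \rangle$ consists of the residues of $0, p, 2p, \dots, (p^{k-1}-1)p$, so $|\mathfrak{m}| = p^{k-1} =: a$. Correspondingly, the number of units is $|U(\mathbb{Z}_{p^k})| = \varphi(p^k) = p^k - p^{k-1} = p^{k-1}(p-1) =: b$; alternatively this follows at once from $|U(\mathbb{Z}_{p^k})| = |\mathbb{Z}_{p^k}| - |\mathfrak{m}| = p^k - p^{k-1}$.

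Finally, applying Theorem~\ref{theorem local implies complete bipartite} to the finite local commutative ring $\mathbb{Z}_{p^k}$ with maximal ideal $\mathfrak{m}$ yields that $G_{UZ}(\mathbb{Z}_{p^k})$ is the complete bipartite graph $K_{a,b}$ with $a = |\mathfrak{m}| = p^{k-1}$ and $b = |U(\mathbb{Z}_{p^k})| = p^{k-1}(p-1)$, as claimed. There is essentially no obstacle here: the only thing requiring care is the (routine) identification of the maximal ideal and the elementary counts of its size and of $\varphi(p^k)$; once those are in place the result is immediate from the already-established local case. (One may also note the degenerate case $k=1$, where $a=1$, $b=p-1$, recovers the star graph $K_{1,p-1}$ of Corollary~\ref{star iff R field}, consistent with $\mathbb{Z}_p$ being a field.)
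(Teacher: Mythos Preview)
Your proof is correct and follows essentially the same route as the paper: recognize $\mathbb{Z}_{p^k}$ as a finite local ring with maximal ideal $\langle \bar p\rangle$, apply Theorem~\ref{theorem local implies complete bipartite}, and compute $|\mathfrak{m}|=p^{k-1}$ and $|U(\mathbb{Z}_{p^k})|=\varphi(p^k)=p^{k-1}(p-1)$. The only difference is that you spell out the verification that $\mathbb{Z}_{p^k}$ is local, which the paper simply asserts.
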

\begin{proof}
Let $n=p^k$ for a prime number $p$ and a positive integer $k$, then $\mathbb{Z}_n$ is a local ring with its maximal ideal $\langle p \rangle=p\mathbb{Z}_n$. By Theorem \ref{theorem local implies complete bipartite}, $G_{UZ}(\mathbb{Z}_n)$ is a complete bipartite graph $K_{a,b}$ with $a=|\langle p \rangle|$ and $b=|U(\mathbb{Z}_n)|$. Since $\langle p \rangle=p\mathbb{Z}_n$, then $a=|p\mathbb{Z}_n|=\frac{n}{p}=p^{k-1}$. Using \textit{Euler's phi function}, we obtain $b=|U(\mathbb{Z}_n)|=p^{k-1}(p-1)$.
\end{proof}

Moving forward, we consider more general factorizations of $n$ and observe how the number of distinct prime divisors of $n$ influences the partite structure of the graph. This leads to the result that $G_{UZ}(\mathbb{Z}_n)$ is an $(m+1)$-partite graph when $n$ is an odd integer with $m$ distinct prime divisors.

\begin{theorem}\label{theorem Zn (m+1)-partition jhj n=p1^k1...pm^km}
If $n=p_1^{k_1}p_2^{k_2}...p_m^{k_m}$, where $p_i$ are distinct prime numbers and $k_i$ are positive integers, then $G_{UZ}(\mathbb{Z}_n)$ is a $(m+1)$-partite graph.
\end{theorem}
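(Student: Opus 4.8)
The plan is to reduce this to Theorem~\ref{the condition for Guz is n+1 partite} by counting the maximal ideals of $\mathbb{Z}_n$. The key fact is that for $n = p_1^{k_1}p_2^{k_2}\cdots p_m^{k_m}$ with the $p_i$ distinct primes, the maximal ideals of $\mathbb{Z}_n$ are precisely $\langle p_i \rangle = p_i\mathbb{Z}_n$ for $i = 1,\dots,m$, and these are pairwise distinct. First I would justify this: by the Chinese Remainder Theorem $\mathbb{Z}_n \cong \mathbb{Z}_{p_1^{k_1}} \times \cdots \times \mathbb{Z}_{p_m^{k_m}}$, and each factor $\mathbb{Z}_{p_i^{k_i}}$ is a local ring with unique maximal ideal $\langle p_i \rangle$; the maximal ideals of a finite product of local rings are obtained by taking the maximal ideal in one coordinate and the whole ring in the others, giving exactly $m$ of them. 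Alternatively, one can argue directly that a maximal ideal of $\mathbb{Z}_n$ corresponds to a maximal ideal of $\mathbb{Z}$ containing $n\mathbb{Z}$, i.e. to a prime $p \mid n$.

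Once we know $\mathbb{Z}_n$ has exactly $m$ maximal ideals, the conclusion is immediate: by Theorem~\ref{the condition for Guz is n+1 partite}, $G_{UZ}(\mathbb{Z}_n)$ is an $(m+1)$-partite graph. Concretely the partition is $\{U(\mathbb{Z}_n), \langle p_1 \rangle, \langle p_2 \rangle \setminus \langle p_1 \rangle, \dots\}$ in the notation of that theorem, reflecting the three-part structure seen for $\mathbb{Z}_{15}$ in Figure~\ref{GambarGuz(Z15)}.

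The only genuine content here is the count of maximal ideals, and that is standard; I do not anticipate a real obstacle. The mildest care needed is to confirm the $p_i\mathbb{Z}_n$ are genuinely distinct as ideals of $\mathbb{Z}_n$ (they are, since $p_i\mathbb{Z}_n$ has index $p_i$ in $\mathbb{Z}_n$, so distinct primes give ideals of different size) and that each is maximal (the quotient $\mathbb{Z}_n/p_i\mathbb{Z}_n \cong \mathbb{Z}_{p_i}$ is a field). Everything then follows by direct invocation of the earlier theorem, so the proof will be short.
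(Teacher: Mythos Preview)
Your proposal is correct and follows exactly the paper's approach: identify the maximal ideals of $\mathbb{Z}_n$ as $\langle p_1\rangle,\dots,\langle p_m\rangle$ and then invoke Theorem~\ref{the condition for Guz is n+1 partite}. The paper simply asserts the maximal-ideal count without the CRT/quotient justification you supply, so your version is if anything more detailed, but the argument is the same.
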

\begin{proof}
If $ n=p_1^{k_1}p_2^{k_2}...p_m^{k_m} $, where $ p_i $ are distinct prime numbers and $ k_i $ are positive integers, then $ \mathbb{Z}_n $ has $m$ maximal ideals, namely $ \langle p_1 \rangle, \langle p_2 \rangle, ..., \langle p_m \rangle $. By Theorem \ref{the condition for Guz is n+1 partite}, $G_{UZ}(\mathbb{Z}_n)$ is a $(m+1)$-partite graph.
\end{proof}

After establishing the general structure of $G_{UZ}(\mathbb{Z}n)$, we begin by investigating the specific conditions under which the graph contains a cycle of length 3. The following lemma shows that if $n$ is a composite odd number and not a power of any prime, then $G_{UZ}(\mathbb{Z}_n)$ necessarily contains a cycle of length 3.

\begin{lemma}\label{lemma Zn with n>6 and odd contains a cycle of length 3}
    If $n$ is a composite odd number and $n\neq p^k$ for all prime numbers $p$, then $G_{UZ}(\mathbb{Z}_n)$ contains a cycle of length 3. 
\end{lemma}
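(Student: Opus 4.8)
The plan is to exhibit an explicit triangle in $G_{UZ}(\mathbb{Z}_n)$ by means of the Chinese Remainder Theorem. Since $n$ is odd, composite, and not a prime power, pick a prime divisor $p$ of $n$, write $n = p^e m$ with $p \nmid m$, and set $a = p^e$ and $b = m$; then $a, b > 1$ (the latter because $n$ is not a power of $p$), $\gcd(a,b) = 1$, and both $a$ and $b$ are odd. Using CRT, choose $x, y \in \mathbb{Z}_n$ with
\[
x \equiv 0 \pmod a, \quad x \equiv 1 \pmod b, \qquad y \equiv 1 \pmod a, \quad y \equiv 0 \pmod b.
\]
I claim $\{x, y, 1\}$ induces a $3$-cycle in $G_{UZ}(\mathbb{Z}_n)$. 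The three elements are pairwise distinct: reducing mod $a$ separates $x$ from both $y$ and $1$, and reducing mod $b$ separates $y$ from $1$.

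Next I would verify the adjacency conditions directly from the definition. Because $a > 1$ divides $x$, the element $x$ is a non-unit, hence a zero divisor in $\mathbb{Z}_n$; similarly $y \in Z(\mathbb{Z}_n)$ since $b \mid y$. Consequently $xy$, $1\cdot x = x$, and $1\cdot y = y$ all lie in $Z(\mathbb{Z}_n)$, so the zero-divisor requirement holds for all three pairs automatically. For the unit requirement: $x + y \equiv 1 \pmod a$ and $x + y \equiv 1 \pmod b$, so $x + y = 1 \in U(\mathbb{Z}_n)$; moreover $x + 1 \equiv 1 \pmod a$ and $x + 1 \equiv 2 \pmod b$, and since $b$ is odd, $\gcd(x+1, a) = \gcd(x+1, b) = 1$, whence $x + 1 \in U(\mathbb{Z}_n)$; symmetrically $y + 1 \equiv 2 \pmod a$ and $y + 1 \equiv 1 \pmod b$ give $y + 1 \in U(\mathbb{Z}_n)$. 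Thus $x$, $y$, $1$ are pairwise adjacent, i.e., $x \to y \to 1 \to x$ is a cycle of length $3$ in $G_{UZ}(\mathbb{Z}_n)$.

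The delicate point is the choice of the three vertices rather than any computation. The vertex $0$ is useless here: it is adjacent only to units, and no two units are adjacent (their product is a unit), so $0$ lies in no triangle. The remedy is to combine one unit — here $1$ — with two zero divisors engineered so that all three pairwise sums are units. Both hypotheses on $n$ are used precisely for this: $n$ not a prime power supplies the nontrivial coprime factorization $n = ab$ with $a, b > 1$, while $n$ odd keeps the coordinate sums equal to $2$ coprime to $n$, which is exactly what forces $x+1$ and $y+1$ to be units. (Equivalently, one could phrase the argument through the isomorphism $\mathbb{Z}_n \cong \prod_i \mathbb{Z}_{p_i^{k_i}}$ with $x$ and $y$ "supported" on different coordinates, but the two-factor splitting above already suffices.)
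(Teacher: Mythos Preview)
Your proof is correct and follows essentially the same approach as the paper: both arguments use the Chinese Remainder Theorem to exhibit the triangle on the unit $1$ together with two zero divisors supported on different prime factors, then verify the pairwise sums are units by exploiting that $2$ is invertible in each odd factor. Your two-factor split $n=ab$ is a mild streamlining of the paper's full prime-power decomposition $\mathbb{Z}_n\cong\prod_i\mathbb{Z}_{p_i^{k_i}}$, but the vertices and the verification are otherwise the same.
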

\begin{proof}
Let $n$ be a composite odd number. Hence, $n$ can be expressed as $n = p_1^{k_1} p_2^{k_2} \cdots p_r^{k_r}$
where each $p_i$ is an odd prime number. By the Chinese Remainder Theorem, we have the following isomorphism of rings
\[
\mathbb{Z}_n \cong \mathbb{Z}_{p_1^{k_1}} \times \mathbb{Z}_{p_2^{k_2}} \times \cdots \times \mathbb{Z}_{p_r^{k_r}}.
\]
Thus, every element $a \in \mathbb{Z}_n$ can be represented as $(a_1,a_2,\dots,a_r)$, where $a_i \in \mathbb{Z}_{p_i^{k_i}}$.  Here, $\bar{x}$ denotes the equivalence class of $x$ in the ring $\mathbb{Z}_n$ or $\mathbb{Z}_{p_i^{k_i}}$, while $\bar{\bar{x}}$ denotes the natural image of $\bar{x}\in\mathbb{Z}_{p_i^{k_i}}$ under the canonical projection modulo $p_i$. Under this isomorphism, each ideal $\langle \bar{p_i} \rangle \subseteq \mathbb{Z}_n$ corresponds to a set of tuples in the product ring whose $i$-th component lies in the ideal $\langle \bar{p_i} \rangle \subseteq \mathbb{Z}_{p_i^{k_i}}$, while the other components are unrestricted. In particular, the maximal ideal $\langle \bar{p_i} \rangle$ in $\mathbb{Z}_n$ is isomorphic to the ideal
\[
\mathfrak{m}_i = \mathbb{Z}_{p_1^{k_1}} \times \cdots \times (\bar{p_i} \mathbb{Z}_{p_i^{k_i}}) \times \cdots \times \mathbb{Z}_{p_r^{k_r}},
\]
which consists of all elements whose $i$-th component is zero modulo $p_i$, and other components can be arbitrary. Since $n\neq p^k$ for all prime numbers $p$, the number of prime factors of $n$ is more than one, or $r \geq 2$. Hence, $\mathbb{Z}_n$ has at least two maximal ideals. Therefore, we can choose three elements $u, x, y \in \mathbb{Z}_n$ as follows

\begin{enumerate}[label=(\roman*)]
\item $u = (\bar{1}, \bar{1},\bar{1},  \dots, \bar{1})$, which corresponds to the identity element in $\mathbb{Z}_n$. Thus, $u=\bar{1}\in U(\mathbb{Z}_n)$.
\item $x = (\bar{\bar{0}}, \bar{\bar{1}},\bar{\bar{1}},  \dots, \bar{\bar{1}})$, whose first component is congruent to $\bar{0} \pmod{p_1}$. Thus, $x \in \langle p_1 \rangle$.
\item $y = (\bar{\bar{1}}, \bar{\bar{0}},\bar{\bar{1}},  \dots, \bar{\bar{1}})$, whose second component is congruent to $\bar{0} \pmod{p_2}$. Thus, $y \in \langle p_2 \rangle$.
\end{enumerate}

\noindent As a consequence, we obtain
\begin{enumerate}[label=(\roman*)]
\item $u+x = (\bar{1},\bar{1},\bar{1},\dots,\bar{1}) + (\bar{\bar{0}},\bar{\bar{1}},\bar{\bar{1}},\dots,\bar{\bar{1}}) = (\bar{\bar{1}},\bar{\bar{2}},\bar{\bar{2}},\dots,\bar{\bar{2}})$. 
Since $p_i$ is an odd prime for all $1\leq i\leq r$, each component of the sum is a unit in $\mathbb{Z}_{p_i^{k_i}}$, so that $u+x \in U(\mathbb{Z}_n)$.
The product $ux=1x=x\in \langle p_1 \rangle$ is a zero divisor because every element in a proper ideal of a finite ring is a zero divisor. This implies that $u$ and $x$ are adjacent in $G_{UZ}(\mathbb{Z}_n)$.

\item $u+y = (\bar{1},\bar{1},\bar{1},\dots,\bar{1}) + (\bar{\bar{1}},\bar{\bar{0}},\bar{\bar{1}},\dots,\bar{\bar{1}}) = (\bar{\bar{2}},\bar{\bar{1}},\bar{\bar{2}},\dots,\bar{\bar{2}})$. 
Since $p_i$ is an odd prime for all $1\leq i\leq r$, each component of this sum is a unit in $\mathbb{Z}_{p_i^{k_i}}$ so that $u+y \in U(\mathbb{Z}_n)$.
The product $uy=1y=y\in \langle p_2 \rangle$ is a zero divisor because every element of a proper ideal in a finite ring is a zero divisor. This implies that $u$ and $y$ are adjacent in $G_{UZ}(\mathbb{Z}_n)$.
\item $x+y = (\bar{\bar{0}},\bar{\bar{1}},\bar{\bar{1}},\dots,\bar{\bar{1}}) + (\bar{\bar{1}},\bar{\bar{0}},\bar{\bar{1}},\dots,\bar{\bar{1}}) = (\bar{\bar{1}}, \bar{\bar{1}},\bar{\bar{2}},\dots,\bar{\bar{2}})$. 
Since $p_i$ is an odd prime for all $1\leq i\leq r$, each component of this sum is a unit in $\mathbb{Z}_{p_i^{k_i}}$ so that $x+y \in U(\mathbb{Z}_n)$.
The product $xy=(\bar{\bar{0}},\bar{\bar{0}},\bar{\bar{1}},\dots,\bar{\bar{1}}) \in \langle p_1 \rangle \cap \langle p_2 \rangle$ is a zero divisor. This implies that $x$ and $y$ are adjacent in $G_{UZ}(\mathbb{Z}_n)$.
\end{enumerate}
Therefore, $G_{UZ}(\mathbb{Z}_n)$ contains a cycle of length 3, namely $u\rightarrow x \rightarrow y \rightarrow u$.
\end{proof}

Conversely, we can also demonstrate that the presence of a cycle of length 3 in $G_{UZ}(\mathbb{Z}_n)$ implies that $n$ must satisfy the same condition: being a composite odd number and not a prime power. This is established by the following lemma.

\begin{lemma}\label{lemma if Guz(n) contains a cycle of length 3 then n>6 and odd}
If $G_{UZ}(\mathbb{Z}_n)$ contains a cycle of length 3, then $n$ is a composite odd number and $n\neq p^k$ for all prime numbers $p$.
\end{lemma}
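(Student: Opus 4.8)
The plan is to prove the statement by contraposition, eliminating one at a time the two ways $n$ could fail the asserted conditions: being even, and being a prime power. The unifying observation is that in either of these cases $G_{UZ}(\mathbb{Z}_n)$ is bipartite, hence contains no odd cycle and in particular no cycle of length $3$; thus a triangle in $G_{UZ}(\mathbb{Z}_n)$ forces $n$ to be odd and not a prime power, from which compositeness will follow.

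First I would handle the even case: if $n$ is even, then by Theorem~\ref{theorem Zn bipartite iff n even} the graph $G_{UZ}(\mathbb{Z}_n)$ is bipartite, hence triangle-free, so the existence of a $3$-cycle already forces $n$ to be odd. Next I would handle the prime-power case: if $n=p^{k}$ for a prime $p$ and a positive integer $k$, then $\mathbb{Z}_n$ is local with maximal ideal $\langle p\rangle$, so by Theorem~\ref{theorem local implies complete bipartite} (equivalently Theorem~\ref{theorem Zn complete bipartite if n=p^k}) the graph $G_{UZ}(\mathbb{Z}_n)$ is complete bipartite, again bipartite and triangle-free; hence a $3$-cycle rules out $n$ being a prime power, and in particular $n$ is not prime.

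To finish, I would observe that a $3$-cycle requires at least three vertices, so $|\mathbb{Z}_n|=n\geq 3$; together with the fact that $n$ is not prime, this shows $n$ is composite. Combining the three observations, a $3$-cycle in $G_{UZ}(\mathbb{Z}_n)$ forces $n$ to be a composite odd number with $n\neq p^{k}$ for every prime $p$, as claimed.

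I do not anticipate a substantive obstacle here, since the proof is an assembly of bipartiteness facts already established for $\mathbb{Z}_n$ and for local rings. The one point deserving care is the case bookkeeping: one must verify that the two excluded situations (``$n$ even'' and ``$n$ a prime power''), together with the trivial bound $n\geq 3$, really exhaust the complement of the asserted property, so that ruling them out leaves exactly ``composite, odd, and not a prime power''.
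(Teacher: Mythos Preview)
Your proposal is correct and follows essentially the same contrapositive strategy as the paper: rule out the even case via Theorem~\ref{theorem Zn bipartite iff n even} and the prime-power case via Theorem~\ref{theorem Zn complete bipartite if n=p^k}, both yielding bipartite (hence triangle-free) graphs. Your version is marginally tidier in that you absorb the paper's separate ``$n$ prime'' case into the prime-power case (since $p=p^1$), and you explicitly invoke $n\geq 3$ to secure compositeness, which the paper leaves implicit.
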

\begin{proof}
We will prove it by contraposition, namely, if $n$ is an even number or $n$ is prime or $n = p^k$ for a prime number $p$, then $G_{UZ}(\mathbb{Z}_n)$ does not contain a cycle of length 3. We consider three cases:
\begin{enumerate}[label=(\roman*)]
\item If $n$ is even, then by Theorem \ref{theorem Zn bipartite iff n even}, $G_{UZ}(\mathbb{Z}_n)$ is a bipartite graph. Hence, $G_{UZ}(\mathbb{Z}_n)$ cannot contain a cycle of length 3.
\item If $n$ is a prime, then by Remark \ref{remark Zn star iff n prime}, $G_{UZ}(\mathbb{Z}_n)$ is a star graph. Hence, $G_{UZ}(\mathbb{Z}_n)$ does not contain a cycle.
\item If $n=p^k$, then by Theorem \ref{theorem Zn complete bipartite if n=p^k}, $G_{UZ}(\mathbb{Z}_n)$ is a complete bipartite graph. Hence, $G_{UZ}(\mathbb{Z}_n)$ cannot contain a cycle of length 3.
\end{enumerate}
Therefore, if $G_{UZ}(\mathbb{Z}_n)$ contains a cycle of length 3, then $n$ is an odd number, composite, and $n\neq p^k$ for all prime number $p$.
\end{proof}

Together, these results yield a complete characterization of when $G_{UZ}(\mathbb{Z}_n)$ contains a cycle of length 3, as summarized in the following theorem.

\begin{theorem}
$G_{UZ}(\mathbb{Z}_n)$ contains a cycle of length 3 if and only if $n$ is a composite odd number and $n\neq p^k$ for all prime numbers $p$.
\end{theorem}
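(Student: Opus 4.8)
The plan is to assemble the theorem directly from the two lemmas immediately preceding it, since together they already supply the two implications of the biconditional. For the ``if'' direction, I would invoke Lemma~\ref{lemma Zn with n>6 and odd contains a cycle of length 3}: under the hypothesis that $n$ is a composite odd number with $n \neq p^k$ for every prime $p$, that lemma explicitly exhibits the triangle $u \to x \to y \to u$ in $G_{UZ}(\mathbb{Z}_n)$ (built from the CRT decomposition and the elements $u=(\bar 1,\dots,\bar 1)$, $x$ vanishing mod $p_1$, $y$ vanishing mod $p_2$), so $G_{UZ}(\mathbb{Z}_n)$ contains a cycle of length $3$. For the ``only if'' direction, I would invoke Lemma~\ref{lemma if Guz(n) contains a cycle of length 3 then n>6 and odd}: if $G_{UZ}(\mathbb{Z}_n)$ contains a $3$-cycle, that lemma — proved by contraposition through the bipartite and star classifications of Theorem~\ref{theorem Zn bipartite iff n even}, Remark~\ref{remark Zn star iff n prime}, and Theorem~\ref{theorem Zn complete bipartite if n=p^k} — forces $n$ to be composite, odd, and not a prime power. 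Combining the two yields the claimed equivalence.

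Since both halves are already isolated as lemmas, there is no genuine obstacle here; the proof is a one-line citation of each direction. The only point meriting a moment's care is checking that the hypotheses appearing on the two sides of the respective lemmas coincide verbatim with the condition ``$n$ is composite, odd, and $n \neq p^k$ for all primes $p$'', so that no stray small case slips through the gap between them — but the two lemmas are phrased with precisely this hypothesis, so the assembly is immediate and the theorem follows.
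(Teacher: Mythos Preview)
Your proposal is correct and matches the paper's own proof exactly: the paper simply writes ``It is clear by Lemma~\ref{lemma Zn with n>6 and odd contains a cycle of length 3} and Lemma~\ref{lemma if Guz(n) contains a cycle of length 3 then n>6 and odd}.'' Your only addition is a brief recap of what each lemma provides, which is harmless and arguably clearer.
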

\begin{proof}
It is clear by Lemma \ref{lemma Zn with n>6 and odd contains a cycle of length 3} and Lemma \ref{lemma if Guz(n) contains a cycle of length 3 then n>6 and odd}.
\end{proof}

With the characterization of a cycle of length 3 in hand, we next turn our attention to the existence of a cycle of length 4 in $G_{UZ}(\mathbb{Z}_n)$. There are two primary cases in which a cycle of length 4 arises: when $n$ is a prime power and when $n$ is an even number greater than 6. The following is the first lemma, which states that a prime power $n$ serves as a sufficient condition for the graph $G_{UZ}(R)$ to contain a cycle of length 4.

\begin{lemma}\label{lemma Zn with n=p^k contains a cycle of length 4}
If $n=p^k$ where $p$ is a prime number, then $G_{UZ}(\mathbb{Z}_n)$ contains a cycle of length 4.
\end{lemma}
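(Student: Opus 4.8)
The plan is to reduce everything to Theorem \ref{theorem Zn complete bipartite if n=p^k}, which already identifies $G_{UZ}(\mathbb{Z}_{p^k})$ with the complete bipartite graph $K_{a,b}$ where $a = p^{k-1} = |\langle p\rangle|$ and $b = p^{k-1}(p-1) = |U(\mathbb{Z}_{p^k})|$. In a bipartite graph any $4$-cycle uses exactly two vertices from each side of the bipartition, and conversely, in a \emph{complete} bipartite graph any choice of two vertices on one side together with two vertices on the other side spans a $4$-cycle (all four required edges are present). So the lemma collapses to the purely numerical assertion $a \ge 2$ and $b \ge 2$, after which the cycle can be exhibited explicitly.

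For the numerical step I would argue: on the maximal-ideal side, $a = p^{k-1} \ge p \ge 2$ once $k \ge 2$; on the unit side, split into $p$ odd, where $b = p^{k-1}(p-1) \ge 1\cdot(p-1) \ge 2$, and $p = 2$, where $b = 2^{k-1} \ge 2$, again using $k \ge 2$. Having $a,b \ge 2$ in hand, one finishes by choosing two distinct zero divisors, e.g. $\bar 0$ and $\bar p$ in $\langle p\rangle \subseteq Z(\mathbb{Z}_{p^k})$, and two distinct units $\bar 1$ and some $u \in U(\mathbb{Z}_{p^k})\setminus\{\bar 1\}$; by completeness of the bipartite graph, $\bar 0 - \bar 1 - \bar p - u - \bar 0$ is a cycle of length $4$ in $G_{UZ}(\mathbb{Z}_{p^k})$.

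The main point to be careful about is the degenerate case $k = 1$, i.e. $n = p$ prime. Then $\mathbb{Z}_p$ is a field, $a = p^{0} = 1$, and by Corollary \ref{star iff R field} the graph is the star $K_{1,\,p-1}$, which is acyclic; so the statement genuinely fails for $k=1$ and the intended hypothesis is that $n$ is a \emph{proper} prime power, $k \ge 2$. I would state this explicitly (or remark that "prime power" is taken here to mean $k\ge 2$). Apart from flagging that restriction — and checking the $p=2$ subcase separately, since there $b = 2^{k-1}$ only exceeds $1$ because $k \ge 2$ — the argument is entirely routine once Theorem \ref{theorem Zn complete bipartite if n=p^k} is invoked.
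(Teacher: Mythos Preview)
Your approach is essentially identical to the paper's: invoke Theorem~\ref{theorem Zn complete bipartite if n=p^k} to identify $G_{UZ}(\mathbb{Z}_{p^k})$ with $K_{a,b}$ for $a=p^{k-1}$, $b=p^{k-1}(p-1)$, and then observe that both parts contain at least two vertices, forcing a $4$-cycle. You are in fact more careful than the paper, which simply asserts ``since $p\ge 2$, there are at least two vertices in each part'' without isolating the $k=1$ case; your observation that the statement genuinely fails for $n=p$ (where $a=1$ and the graph is a star) is correct and worth flagging, and your case split $p$ odd versus $p=2$ for verifying $b\ge 2$ when $k\ge 2$ is the right level of precision.
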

\begin{proof}
Consider the ring $\mathbb{Z}_n$ where $n=p^k$ for some prime $p$ and some positive integer $k$. By Theorem \ref{theorem Zn complete bipartite if n=p^k}, $G_{UZ}(\mathbb{Z}_n)$ is a complete bipartite graph $K_{a,b}$ where $a=p^{k-1}$ and $b=p^{k-1}(p-1)$. Since $p \geq 2$, there are at least two vertices in each part of the partition. Thus, $G_{UZ}(\mathbb{Z}_n)$ contains a cycle of length 4.
\end{proof}

Before establishing the second sufficient condition for the unit zero-divisor graph to contain a cycle of length 4, we first apply the KST theorem to a specific case where the graph in question is a regular bipartite graph.
Suppose that $G = (X \cup Y, E(G))$ is a $t$-regular bipartite graph with $|X| = |Y| = r$. Then the total number of edges in $G$ is $|E(G)| = r \cdot t$.
By the converse of the KST theorem, in order to ensure that $G$ contains a subgraph isomorphic to $K_{2,2}$ (and hence contains a cycle of length 4), it is sufficient that 
$rt > r^{3/2} + r$ or $t > \sqrt{r} + 1$. This observation leads to the following remark, which will later serve as a key component in proving further properties of unit zero-divisor graphs.

\begin{remark}\label{remark:C4-condition}
Let $G = (X \cup Y, E(G))$ be a $t$-regular bipartite graph with $|X| = |Y| = r$. If $t > \sqrt{r} + 1$,
then $G$ contains a subgraph isomorphic to a cycle of length 4.
\end{remark}

The following lemma provides the second sufficient condition for the graph $G_{UZ}(R)$ to contain a cycle of length 4.
 
\begin{lemma}\label{lemma Zn with n>6 even contains a cycle of length 4}
If $n\geq 8$ and an even number, then $G_{UZ}(\mathbb{Z}_n)$ contains a cycle of length 4.
\end{lemma}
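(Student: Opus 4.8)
The plan is to realize $G_{UZ}(\mathbb{Z}_n)$ as a balanced regular bipartite graph and then invoke Remark~\ref{remark:C4-condition}. Since $n$ is even, $2$ is a zero divisor of $\mathbb{Z}_n$, hence $2\notin U(\mathbb{Z}_n)$. Theorem~\ref{theorem if 2 is non-unit then U(R)-regular} then gives that $G_{UZ}(\mathbb{Z}_n)$ is $|U(\mathbb{Z}_n)|$-regular, i.e. $\varphi(n)$-regular, and Theorem~\ref{theorem Zn bipartite iff n even} gives that it is bipartite. A regular bipartite graph with at least one edge has parts of equal size (count the edges from each side), so both parts have cardinality $n/2$; equivalently, the bipartition supplied by the proof of Theorem~\ref{bipartite condition} is $\{\langle 2\rangle,\ \mathbb{Z}_n\setminus\langle 2\rangle\}$, and $\langle 2\rangle=2\mathbb{Z}_n$ has index $2$ in $\mathbb{Z}_n$. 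Thus we may take $t=\varphi(n)$ and $r=n/2$ in Remark~\ref{remark:C4-condition}.

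By Remark~\ref{remark:C4-condition} it then suffices to verify that $\varphi(n)>\sqrt{n/2}+1$ for every even $n\geq 8$. I would first dispatch the two smallest cases by direct computation: $\varphi(8)=4>3=\sqrt{4}+1$ and $\varphi(10)=4>\sqrt{5}+1$. For $n\geq 12$ I would chain two elementary inequalities: the classical estimate $\varphi(n)>\sqrt{n}$, valid for all $n>6$ (provable from $\varphi(n)=\prod_{p^a\,\|\,n}p^{a-1}(p-1)$ by noting that $\varphi(n)^2/n=\prod_{p^a\,\|\,n}p^{a-2}(p-1)^2>1$ once $n>6$, since every odd prime-power factor and every factor $2^a$ with $a\geq 2$ already contributes at least $1$ while the lone $2^1$ factor, contributing $1/2$, is then outweighed); together with $\sqrt{n}>\sqrt{n/2}+1$, which holds precisely when $\sqrt{n}>(1-1/\sqrt{2})^{-1}\approx 3.41$, i.e. for $n\geq 12$. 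Combining these gives $\varphi(n)>\sqrt{n}>\sqrt{n/2}+1$, and Remark~\ref{remark:C4-condition} produces a $K_{2,2}$, hence a $4$-cycle, in $G_{UZ}(\mathbb{Z}_n)$.

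The only genuine work is the number-theoretic inequality $\varphi(n)>\sqrt{n/2}+1$, and its one subtlety is exactly why $n=8$ and $n=10$ must be handled apart from the general argument: for those two values $\sqrt{n}$ is in fact slightly smaller than $\sqrt{n/2}+1$, so the clean chain $\varphi(n)>\sqrt{n}>\sqrt{n/2}+1$ breaks and one must fall back on the explicit values $\varphi(8)=\varphi(10)=4$. Everything else — that $2\notin U(\mathbb{Z}_n)$, the regularity and bipartiteness inputs, and the balancedness of a regular bipartite graph — is routine bookkeeping.
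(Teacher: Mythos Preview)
Your proposal is correct and follows essentially the same route as the paper: both reduce to Remark~\ref{remark:C4-condition} via Theorems~\ref{theorem if 2 is non-unit then U(R)-regular} and~\ref{bipartite condition}, leaving only the inequality $\varphi(n)>\sqrt{n/2}+1$ for even $n\ge 8$ to check. Your verification of that inequality (handling $n=8,10$ directly and chaining $\varphi(n)>\sqrt{n}>\sqrt{n/2}+1$ for $n\ge 12$) is in fact more careful than the paper's, which checks only $n=8$ and then simply asserts that $\varphi(n)$ grows faster than $\sqrt{n/2}+1$.
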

\begin{proof}
If $n$ is even, then $2\notin U(\mathbb{Z}_n)$. Let $|U(\mathbb{Z}_n)|=t$. By Theorem \ref{theorem if 2 is non-unit then U(R)-regular} and Theorem \ref{bipartite condition}, $G_{UZ}(\mathbb{Z}_n)$ is a $t$-reguler bipartite graph with partite sets of size $r=n/2$. By Remark \ref{remark:C4-condition}, $G_{UZ}(\mathbb{Z}_n)$ will contain a cycle of length 4 when 
\[
t > \sqrt{\frac{n}{2}} + 1.
\]
Using Euler phi function $\phi$, we have $|U(\mathbb{Z}_n)|=t=\phi(n)$. 
Recall that for any positive integer $n$, the Euler phi function $\phi(n)$ counts the number of positive integers less than or equal to $n$ that are coprime to $n$. If $n$ has the prime factorization
\[
n = p_1^{k_1} p_2^{k_2} \cdots p_s^{k_s},
\]
then the Euler phi function is given explicitly by
\[
\phi(n) = n \prod_{i=1}^s \left(1 - \frac{1}{p_i} \right).
\]
If $n=8=2^3$, then $\phi(n)=4>\sqrt{\frac{4}{2}}+1=\sqrt{\frac{n}{2}}+1$. Hence, $G_{UZ}(\mathbb{Z}_n)$ contains a cycle of length 4. Since $\phi(n)$ generally grows faster than $\sqrt{n/2} + 1$ for larger $n$, we conclude that inequality
\[
\phi(n) > \sqrt{\frac{n}{2}} + 1
\]
is satisfied for all even integers $n \geq 8$.
Therefore, if $n\geq 8$ and an even number, then $G_{UZ}(\mathbb{Z}_n)$ contains a cycle of length 4.
\end{proof}

By analyzing the structure of these cycles, we determine exactly when $G_{UZ}(\mathbb{Z}_n)$ forms a cycle graph. The complete characterization is provided in the theorem below.

\begin{theorem}\label{theorem Zn is a cycle iff n=4 and n=6}
$G_{UZ}(\mathbb{Z}_n)$ is a cycle graph if and only if $n=4$ or $n=6$.
\end{theorem}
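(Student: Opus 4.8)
The plan is to use the characterization of a cycle graph as a connected $2$-regular graph, together with the exact degree of the vertex $\bar{0}$ recorded in Lemma~\ref{lemma about maximum degree in general}, namely $\deg_{G_{UZ}(\mathbb{Z}_n)}(\bar{0}) = |U(\mathbb{Z}_n)| = \phi(n)$, where $\phi$ is Euler's totient function.

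For the forward implication, suppose $G_{UZ}(\mathbb{Z}_n)$ is a cycle graph. Then it is $2$-regular, so in particular $\phi(n) = \deg_{G_{UZ}(\mathbb{Z}_n)}(\bar{0}) = 2$. A routine arithmetic check (using that $\phi$ is multiplicative and that $\phi(p^a) = p^{a-1}(p-1)$, so $\phi$ grows quickly) shows that $\phi(n) = 2$ forces $n \in \{3,4,6\}$. The value $n = 3$ must then be discarded: since $\mathbb{Z}_3$ is a field, Remark~\ref{remark Zn star iff n prime} (equivalently Corollary~\ref{star iff R field}) gives $G_{UZ}(\mathbb{Z}_3) \cong K_{1,2}$, a path on three vertices, which is not a cycle graph. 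This leaves $n = 4$ or $n = 6$.

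For the converse I would simply identify the two graphs. If $n = 4$, then $\mathbb{Z}_4$ is local with maximal ideal $\langle \bar{2} \rangle = \{\bar{0},\bar{2}\}$ and $|U(\mathbb{Z}_4)| = 2$, so Theorem~\ref{theorem local implies complete bipartite} yields $G_{UZ}(\mathbb{Z}_4) \cong K_{2,2}$, the $4$-cycle $C_4$. If $n = 6$, then $2 \notin U(\mathbb{Z}_6)$, so Theorem~\ref{theorem if 2 is non-unit then U(R)-regular} already makes $G_{UZ}(\mathbb{Z}_6)$ a $2$-regular graph on $6$ vertices; a direct reading of the adjacent pairs (already displayed in Figure~\ref{GambarGuz(Z6)}) exhibits it as the single $6$-cycle $\bar{0} \rightarrow \bar{1} \rightarrow \bar{4} \rightarrow \bar{3} \rightarrow \bar{2} \rightarrow \bar{5} \rightarrow \bar{0}$, hence $G_{UZ}(\mathbb{Z}_6) \cong C_6$. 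This completes the plan.

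The proof is short, and the main thing to be careful about (the ``obstacle'', such as it is) is conceptual bookkeeping rather than computation: one must use ``cycle graph'' in the strict sense of a connected $2$-regular graph, so that $2$-regularity alone is not enough and the case $n = 6$ genuinely requires the connectivity check; and one must not overlook the spurious solution $n = 3$ of $\phi(n) = 2$, which is eliminated only via the field/star structure and not by a degree count.
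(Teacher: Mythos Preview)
Your proof is correct, and your forward implication is cleaner than the paper's. The paper argues by contrapositive and splits into cases ($n\in\{1,2\}$; $n$ prime; $n>6$ composite), invoking in the last case that $\phi(n)>2$ together with Lemma~\ref{lemma about maximum degree in general} to produce a vertex of degree exceeding~$2$. You instead go straight to the equation $\phi(n)=2$ forced by $2$-regularity at $\bar 0$, solve it to get $n\in\{3,4,6\}$, and then discard $n=3$ via the star structure. This is shorter and avoids the case split entirely. For the converse the two proofs are close: for $n=4$ you appeal to Theorem~\ref{theorem local implies complete bipartite} to identify $K_{2,2}$, while the paper uses $2$-regularity on four vertices; for $n=6$ the paper observes that a simple bipartite $2$-regular graph on six vertices must be $C_6$, whereas you simply read off the $6$-cycle from the adjacency list. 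Your explicit caution that $2$-regularity alone does not settle $n=6$ without a connectivity check is a point the paper leaves implicit in its ``bipartite $+$ $2$-regular $+$ six vertices'' sentence.
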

\begin{proof}
($\Leftarrow$) In the ring $\mathbb{Z}_4$, we have $U(\mathbb{Z}_4)=\{\bar{1},\bar{3}\}$. Hence $|U(\mathbb{Z}_4)|=2$. By Theorem \ref{theorem if 2 is non-unit then U(R)-regular}, $G_{UZ}(\mathbb{Z}_4)$ is a 2-regular graph. Recall that a unit-zero divisor graph is always a simple graph, which means it contains neither loops nor multiple edges. Since there are only four vertices in $G_{UZ}(\mathbb{Z}_4)$ and all its vertices have degree 2, $G_{UZ}(\mathbb{Z}_4)\cong C_4$.

In the ring $\mathbb{Z}_6$, we have $U(\mathbb{Z}_6)=\{\bar{1},\bar{5}\}$. Hence $|U(\mathbb{Z}_6)|=2$. Since $2\notin U(\mathbb{Z}_6)$, by Theorem \ref{bipartite condition}, $G_{UZ}(\mathbb{Z}_6)$ is a bipartite graph, and by Theorem \ref{theorem if 2 is non-unit then U(R)-regular}, $G_{UZ}(\mathbb{Z}_6)$ is a 2-regular graph. With six vertices, to be a bipartite and 2-regular graph, it must be $G_{UZ}(\mathbb{Z}_4)\cong C_6$.

\noindent($\Rightarrow$) Let $n\neq 4$ and $n \neq 6$. We prove that $G_{UZ}(\mathbb{Z}_n)$ is not a cycle graph for all $n$ in addition to both.

\begin{enumerate}[label=(\roman*)]
\item If $n=1$ and $n=2$, then $G_{UZ}(\mathbb{Z}_n)$ is not a cycle since the unit-zero divisor graph is a simple graph.

\item If $n$ is a prime number, then by Remark \ref{remark Zn star iff n prime}, $G_{UZ}(\mathbb{Z}_n)$ is a star graph. Hence $G_{UZ}(\mathbb{Z}_n)$ is not a cycle.
\item If $n>6$ and $n$ is a composite, then using the Euler phi function, we have $|U(\mathbb{Z}_n)|=\phi(n)>2$. By Lemma \ref{lemma about maximum degree in general}, the maximum degree in $G_{UZ}(\mathbb{Z}_n)$ is $|U(\mathbb{Z}_n)|$. This implies that there is a vertex in $G_{UZ}(\mathbb{Z}_n)$ with a degree greater than two. Then $G_{UZ}(\mathbb{Z}_n)$ is not a cycle graph.
\end{enumerate}

Thus, if $n \neq 4$ and $n \neq 6$, then $G_{UZ}(\mathbb{Z}_n)$ is not a cycle graph. Therefore, if $G_{UZ}(\mathbb{Z}_n)$ is a cycle graph, then $n=4$ and $n=6$.
This completes the proof.
\end{proof}

Finally, we identify exactly when $G_{UZ}(\mathbb{Z}_n)$ takes the form of a path graph. These cases, while rare, highlight the simplicity and extremal structure that can arise from particular small values of $n$.
\begin{theorem}\label{theorem Zn trajectory jhj n=2 and n=3}
$G_{UZ}(\mathbb{Z}_n)$ is a path graph if and only if $n=2$ or $n=3$.
\end{theorem}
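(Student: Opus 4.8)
The plan is to treat the two directions separately, reducing everything to classification facts about $G_{UZ}(\mathbb{Z}_n)$ already established above, together with the two elementary observations that a path graph has maximum degree at most $2$ and contains no cycle.

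For the ($\Leftarrow$) direction I would just check the two values directly. Since $2$ and $3$ are prime, Remark~\ref{remark Zn star iff n prime} gives that $G_{UZ}(\mathbb{Z}_2)$ and $G_{UZ}(\mathbb{Z}_3)$ are star graphs, necessarily $K_{1,1}$ and $K_{1,2}$; these are precisely the paths $P_2$ and $P_3$. (Equivalently one can read off the adjacencies in $\mathbb{Z}_3$ directly: $\bar 0$ is adjacent to both $\bar 1$ and $\bar 2$, while $\bar 1+\bar 2=\bar 0\notin U(\mathbb{Z}_3)$, so $G_{UZ}(\mathbb{Z}_3)$ is the path $\bar 1-\bar 0-\bar 2$ shown in Figure~\ref{GambarGuz(Z3)}.)

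For the ($\Rightarrow$) direction I would prove the contrapositive: assuming $n\neq 2$ and $n\neq 3$, I would show $G_{UZ}(\mathbb{Z}_n)$ is not a path by splitting into three cases that together exhaust all remaining $n$. \emph{Case 1: $n\notin\{1,2,3,4,6\}$.} Then $\phi(n)\geq 3$, and as shown in the proof of Lemma~\ref{lemma about maximum degree in general} we have $\deg_{G_{UZ}(\mathbb{Z}_n)}(\bar 0)=|U(\mathbb{Z}_n)|=\phi(n)\geq 3$; a vertex of degree at least $3$ rules out a path. \emph{Case 2: $n=1$.} Then $G_{UZ}(\mathbb{Z}_1)$ is a single vertex with no edges, which is not a path graph. \emph{Case 3: $n=4$ or $n=6$.} By Theorem~\ref{theorem Zn is a cycle iff n=4 and n=6}, $G_{UZ}(\mathbb{Z}_n)\cong C_4$ or $C_6$, and a graph containing a cycle is not a path. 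Since $n\neq 2,3$ forces $n\in\{1,4,6\}$ or $\phi(n)\geq 3$, these cases are exhaustive.

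The only point requiring care is the arithmetic underpinning Case 1, namely the claim that $\phi(n)\leq 2$ holds exactly for $n\in\{1,2,3,4,6\}$ (in particular the only composite values are $4$ and $6$, consistent with Theorem~\ref{theorem Zn is a cycle iff n=4 and n=6}). This is a routine consequence of the product formula $\phi(n)=n\prod_{p\mid n}\bigl(1-\tfrac1p\bigr)$ together with a finite check on small $n$, so I expect no genuine obstacle; the whole argument is essentially an assembly of earlier results.
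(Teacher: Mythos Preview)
Your proof is correct, and the $(\Leftarrow)$ direction as well as the cases $n=1$ and $n\in\{4,6\}$ in the contrapositive match the paper's argument exactly. Where you diverge is in handling the remaining values of $n$: the paper splits these into primes $n>3$ (star graph with center of degree $>2$) and composites $n>6$ (which contain a cycle, via Lemmas~\ref{lemma Zn with n>6 and odd contains a cycle of length 3} and~\ref{lemma Zn with n>6 even contains a cycle of length 4}), whereas you treat all $n\notin\{1,2,3,4,6\}$ in one stroke using $\deg(\bar 0)=\phi(n)\geq 3$. Your route is more economical: it relies only on Lemma~\ref{lemma about maximum degree in general} and the elementary classification of $n$ with $\phi(n)\leq 2$, and it entirely bypasses the cycle lemmas. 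The paper's decomposition, on the other hand, establishes a slightly stronger obstruction for composite $n>6$ (an actual cycle, not just a high-degree vertex), but that extra strength is unnecessary for the statement being proved.
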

\begin{proof}
($\Leftarrow$) By Remark \ref{remark Zn star iff n prime}, $G_{UZ}(\mathbb{Z}_2)$ and $G_{UZ}(\mathbb{Z}_3)$ are star graphs. However, star graphs of orders two and three are paths. Thus, if $n=2$ or $n=3$, then $G_{UZ}(\mathbb{Z}_n)$ is a path graph.

\noindent($\Rightarrow$) Let $n\neq 2$ and $n \neq 3$. We prove that $G_{UZ}(\mathbb{Z}_n)$ is not a path graph for all $n$ in addition to both.

\begin{enumerate}[label=(\roman*)]
    \item If $n=1$, then $G_{UZ}(\mathbb{Z}_n)\cong K_1$ is not a path. 
    \item If $n>3$ and $n$ is a prime number, then by Remark \ref{remark Zn star iff n prime}, $G_{UZ}(\mathbb{Z}_n)$ is a star graph with a central degree greater than two. Hence $G_{UZ}(\mathbb{Z}_n)$ is not a path graph.
    \item If $n=4$ and $n=6$, then by Theorem \ref{theorem Zn is a cycle iff n=4 and n=6}, $G_{UZ}(\mathbb{Z}_n)$ is a cycle graph.  Hence $G_{UZ}(\mathbb{Z}_n)$ is not a path graph.
    \item If $n>6$ and $n$ is a composite, then by Lemma \ref{lemma Zn with n>6 and odd contains a cycle of length 3} and Lemma \ref{lemma Zn with n>6 even contains a cycle of length 4}, $G_{UZ}(\mathbb{Z}_n)$ contains a cycle. Hence $G_{UZ}(\mathbb{Z}_n)$ is not a path graph.
\end{enumerate}
Thus, if $n\neq 2$ and $n \neq 3$, then $G_{UZ}(\mathbb{Z}_n)$ is not a path graph.
Therefore, if $G_{UZ}(\mathbb{Z}_n)$ is a path graph, then $n=2$ or $n=3$. 
This completes the proof.
\end{proof}

\section*{Conclusion}

This study has provided a comprehensive examination of the structural properties of the unit-zero divisor graph $G_{UZ}(R)$ associated with commutative rings with identity. By analyzing various types of rings, including local rings and rings of the form $\mathbb{Z}_n$, we established results concerning planarity, bipartiteness, regularity, and Hamiltonicity. The results show a strong connection between the algebraic properties of the ring and the structure of its associated graph. In particular, properties such as the set of units, maximal ideals, and the Jacobson radical significantly influence the shape and behavior of the graph. Notably, several characterizations were provided for when $G_{UZ}(R)$ becomes a star, a cycle, a path, or a complete bipartite graph.

Despite these advances, several questions remain open. In particular, the behavior of graph-theoretic parameters such as diameter and girth of $G_{UZ}(R)$ has not yet been fully explored. Determining general bounds or exact values of these parameters for broad classes of rings could shed further light on the connectivity of the graph and the cyclic structure. Furthermore, investigating the chromatic number, clique number, and topological indices of $G_{UZ}(R)$ may reveal deeper algebraic and combinatorial patterns worthy of future research.


\subsection*{Acknowledgments}

The author gratefully acknowledges the Ministry of Higher Education, Science, and Technology of the Republic of Indonesia for the financial support provided through the Beasiswa Pendidikan Indonesia (BPI) scholarship, which has supported the completion of this study.

\bibliographystyle{ieeetr}
\bibliography{main}

\end{document}